\title{Optimal reinsurance in a competitive market}
\date{}
\newcommand{\footremember}[2]{%
    \footnote{#2}
    \newcounter{#1}
    \setcounter{#1}{\value{footnote}}%
}
\newcommand{\footrecall}[1]{%
    \footnotemark[\value{#1}]%
} 
\author{
Lea Enzi\footremember{TUG}{Institute of Statistics, Graz University of Technology, Kopernikusgasse 24/III, 8010 Graz, Austria.
lea.enzi@tugraz.at, stefan.thonhauser@tugraz.at} \& Stefan Thonhauser\footrecall{TUG}
  }
\date{}
\newtheorem{theorem}{Theorem}[section]
\newtheorem{lemma}{Lemma}[section]
\newtheorem{corollary}{Corollary}[section]
\theoremstyle{remark}
\newtheorem{remark}{Remark}[section]
\theoremstyle{definition}
\newtheorem{definition}{Definition}[section]
\numberwithin{equation}{section}
\newcommand*\diff{\mathop{}\!\mathrm{d}}
\newcommand{\ind}{\mathbbm{1}_}
\DeclareMathOperator*{\argmax}{arg\,max}
\DeclareMathOperator*{\argmin}{arg\,min}
\begin{document}

\maketitle
\section*{Abstract}
We study a stochastic differential game in a ruin theoretic environment. In our setting two insurers compete for market share, which is represented by a joint performance functional. Consequently, one of the insurers strives to maximize it, while the other seeks to minimize it. As a modelling basis we use classical surplus processes extended by dynamic reinsurance opportunities, which allows us to use techniques from the theory of piecewise deterministic Markov processes to analyze the resulting game. In this context we show that a dynamic programming principle for the upper and lower value of the game holds true and that these values are unique viscosity solutions to the associated Bellman-Isaacs equations.
Finally, we provide some numerical illustrations.

\section{Introduction}
Stochastic differential games put players in a dynamic competition where both strategy and chance influence the outcome. \cite{FlemingSouganadis1989} laid the groundwork for the study of stochastic differential games. They considered two-player zero-sum stochastic differential games and established the key principles regarding value functions and associated Bellman-Isaacs equations. Numerous publications were ultimately based on this. \cite{Ferreira2019} for example extended this theory to a random time horizon. \cite{BuckdahnLi2008} considered a problem where the cost functionals are given by backward stochastic differential equations. An overview on different approaches together with a ``weak formulation" of stochastic differential games was given by \cite{Possamai2020}.

This theory also proves to be suitable in the context of reinsurance, where insurers navigate a dynamic competition against each other and the uncertainties of claim arrivals. Modeling these interactions as a stochastic differential game provides a framework for investigating a range of problems. For example, \cite{TaksarZeng2011} considered a reinsurance game where each of the players' wealth is represented by a surplus process which can be controlled individually by acquiring reinsurance. \cite{Thogersen2019} on the other hand considered a game using the premium as control variate, accounting for the number of potential costumers.

A considerable branch of actuarial literature deals with so-called reinsurance games based on Stackelberg equilibria, see for instance \cite{Chen_Shen_2018} or \cite{CAO2022128}.
These games involve a leader and a follower with differing decision-making processes, resulting in an asymmetric situation.

The majority of existing literature relies on a diffusion approximation. However, we  use a different modeling framework, which captures the specific characteristics of our problem better.
We consider a similar game as in \cite{TaksarZeng2011}, where we use the ``strategy against control" approach, introduced in \cite{FlemingSouganadis1989}. The surplus process is given by the classical risk model, but we make use of the fact that this corresponds to a piecewise deterministic Markov process (PDMP). These types of processes were introduced by \cite{Davis1984,Davis}. We leverage the unique structure of these processes by exploiting the properties of the underlying jump filtration. A benefit of our approach is that we avoid the need for ``restrictive strategies", due to the  ``simpler" filtration compared to diffusion processes. A comprehensive theoretical treatment of jumping Markov processes can be found in \cite{Skorokhod1996}.

For the reinsurance control, we use the classical approach presented by \cite{Schmdili2001,Schmidli2008} who aims at minimizing the probability of ruin. Further works related to this topic are for example by \cite{AzMul2005,AzcueMuler2014} who consider a dividend maximization problem, or \cite{Eisenberg2011} who minimize capital injections into an insurance portfolio.

The paper is structured as follows. In Section 2, we introduce the specific underlying insurance model and the stochastic differential game. In Section 3, we study the characteristics of the value functions and show that the dynamic programming principle holds. In  Section 4, we prove that the value functions are viscosity solutions of the associated Bellman-Isaacs equations and that a comparison result holds true. In Section 5, we numerically solve these equations and compute approximations to the maximizing and minimizing controls.

\section{Model setup}
\subsection{The underlying insurance model} 
\label{subsec:Modelsetup}

Let $(\Omega, \mathcal{F},\mathbb{P})$ be a probability space. 
We consider two insurance portfolios with surplus processes of the form
\begin{equation}
\label{surplusProcesses}
	X_{t}^i = x_i + c_i t - \sum_{j=1}^{N^i_t}Y_j^i, \quad i = 1,2,
\end{equation}
where $x_i\geq 0$ is the initial capital, $c_i>0$ the premium rate and $N^i=(N^i_t)_{t\geq 0}$ is a homogeneous Poisson process with intensity $\lambda_i>0$. Furthermore, $\left\lbrace Y_j^i\right\rbrace _{j \in \mathbb{N}}$ is a sequence of i.i.d. random variables with continuous distribution function $F_Y^i$, where $F_Y^i(0)=0$. Moreover, $N^1, N^2, \{Y_j^1\}_{j \in \mathbb{N}}$ and $\{Y_j^2\}_{j \in \mathbb{N}}$ are mutually independent.

These processes are \textit{piecewise deterministic Markov processes} (PDMPs), which were introduced by \cite{Davis1984,Davis} and have a special structure that we will utilize later. Indeed, let $(\mathcal{Z},\mathcal{A},\ell)$ be the unit interval probability space, where $\mathcal{Z}=[0,1]$, $\mathcal{A}$ is the class of Lebesgue measurable sets and $\ell$ the Lebesgue measure. Then, the random variable $U:\mathcal{Z}\rightarrow \mathbb{R}$ where $U(z)=z$ is uniformly distributed on $\mathcal{Z}$. Now, let $\Omega=\prod_{i=1}^{\infty}\mathcal{Z}_i$, where $\{\mathcal{Z}_i\}_{i \in \mathbb{N}}$ are copies of $\mathcal{Z}$. $\Omega$ is called \textit{Hilbert cube} and together with the product $\sigma$-algebra $\mathcal{F}$ and product measure $\mathbb{P}$ it builds the sample space for a countable sequence of i.i.d. random variables $U_1,U_2,\dots$, where each $U_n\sim U(0,1)$. For each $\omega=(\omega_1,\omega_2,...) \in \Omega$, $U_n(\omega)=\omega_n$. Now, for fixed $i \in \{1,2\}$, consider such a sequence of uniform random variables $\{U_j\}_{j \in \mathbb{N}}$ and define
\begin{align}
\label{eq:definitionofRV}
    &S_n^i(\omega)=-\lambda_i^{-1}\ln(1-U_{2n-1}(\omega)), \quad T_n^i(\omega)=\sum_{k=1}^{n}S_k^i(\omega),\\
    &X_{T_n^i}^i(\omega)=X_{T_{n-1}^i}^i(\omega)+c_i S_n^i(\omega)-(F_Y^{i})^{-1}(U_{2n}(\omega)), \nonumber\\
    &N_t^i(\omega) = \sum_{k=1}^{\infty}\mathbbm{1}_{\{ T_k^i(\omega)\leq t\}}, \nonumber
\end{align}
for $ n \in \mathbb{N}$, where $T_0^i:=0$ and $X_0^i=x_i$. These definitions correspond to \eqref{surplusProcesses}. For $i \in \{1,2\}$ the generator of this PDMP is given by 
\begin{equation*}
    \mathcal{A}^i f(x) = c_i f'(x) + \lambda_i \int_{0}^\infty f(x-y) -f(x) \diff F_Y^i(y),
\end{equation*}
for $f \in \mathcal{D}(\mathcal{A}^i)$, which is the domain of the generator (see e.g. \cite{Davis} or \cite{Rolski}).

\subsection{A two players game}
We consider a game similar to the one analyzed by \cite{TaksarZeng2011}. We assign the surplus processes \eqref{surplusProcesses} to two insurance companies, which compete on a common insurance market. We refer to the company corresponding to $i=1$ as player 1 and the other as player 2. The players can observe each other's strategy. Therefore, it is necessary to employ a common filtration, i.e., the $\mathbb{P}$-completed filtration $\{\mathcal{F}_t\}_{t\geq 0}$ which is generated by the combined pure jump process. We define jump times similarly to \eqref{eq:definitionofRV}, i.e., for each $\omega \in \Omega$ and i.i.d. uniform random variables $U_1,U_2,\dots$ with $U_n(\omega)=\omega_n$,
\begin{equation}
    \label{eq:defJumptimes}
    S_n(\omega)=-(\lambda_1+\lambda_2)^{-1}\ln(1-U_{2n-1}(\omega)), \quad T_n(\omega)=\sum_{k=1}^{n}S_k(\omega).
\end{equation}
Now, we consider reinsurance controls of the following form:\\
Let $A_1, A_2 \subset \mathbb{R}$ be compact sets. Then, the reinsurance controls $u_1=(u_t^1)_{t\geq 0}$ and $u_2=(u_t^2)_{t\geq 0}$ are $A_1$ and $A_2$-valued stochastic processes respectively, which are predictable on $\Omega$ in the sense of Definition~(26.3) in \cite[p.~67]{Davis}. This means, that there exists a measurable function $g_1^i:\mathbb{R}_0^+\times\mathbb{R}\rightarrow A^i$ and measurable functions $g_k^i:\mathbb{R}_0^+ \times (\mathbb{R}_0^+\times \mathbb{R})^{k-1}\rightarrow A^i$, $k=1,2,\dots$, such that 
\begin{align}
\label{predictable}
    u^i_t(\omega) = &g_1^i(t,x)\mathbbm{1}_{\{t\leq T_1(\omega)\}} \nonumber\\
    &+\sum_{k=2}^{\infty}g_k^i\left(t, S_1(\omega), X_{T_1}(\omega),\dots,S_{k-1}(\omega),X_{T_{k-1}}(\omega)\right)\mathbbm{1}_{\{T_{k-1}(\omega)<t\leq T_{k}(\omega)\}},
\end{align} 
for $i=1,2$.
We call these controls \textit{admissible} and denote the set of all such by $\mathcal{U}_1$ and $\mathcal{U}_2$ for $i=1,2$ respectively.

The part of the claim the first insurer has to pay is given by a retention or risk share function $r: \mathbb{R}_0^+\times (A_1 \cup A_2) \rightarrow \mathbb{R}_0^+$. Thus, if a claim of size $y$ occurs at time $t$, the insurer has to pay $r(y,u_t^i)$ and $y-r(y,u_t^i)$ is covered by the reinsurer. Consequently, it is natural to assume that $r$ is monotone increasing in $y$ and $r(y,\cdot) \in [0,y]$. Further, we assume that $r(y,\cdot):(A_1 \cup A_2) \rightarrow \mathbb{R}_0^+$ is continuous for all $y \in \mathbb{R}_0^+$. The premium the first insurer has to pay for the reinsurance is described by some measurable and continuous function $p_i: A_i \rightarrow \mathbb{R}$, which reduces the premium incomes of the first insurer to
\begin{equation*}
	c_i(u_t^i)=c_i - p_i(u_t^i), \quad i=1,2.
\end{equation*}
If one assumes that reinsurance is expensive in the sense that a full cover results in a negative premium, then one would impose existence of a control $\hat{u}_i \in \mathcal{U}_i$ such that 
\begin{equation*}
	c_i(\hat{u}_i):= \pi_i <0, \quad i=1,2.
\end{equation*}
In general, we will not assume this feature. 
\begin{remark}
    The theoretical framework remains unaffected by considering distinct retention levels, $r_1$ and $r_2$, for the individual players. For simplicity of notation, we assume identical reinsurance types.
\end{remark}

The controlled surplus processes are now given by
\begin{equation*}
	X_t^{i, u_i}=x_i+\int_{0}^{t}c_i(u_s^i) \diff s- \sum_{j=1}^{N_t^i}r(Y_j^i,u_{T_j^i}^i), \quad i=1,2.
\end{equation*}
\noindent
We define a new controlled process $X^{u_1 u_2}$ as the difference of the two surplus processes
\begin{equation}
	X_t^{u_1u_2}=x+\int_{0}^{t}\left( c_1(u_s^1) - c_2(u_s^2) \right) \diff s - \sum_{j=1}^{N_t^1}r(Y_j^1,u_{T_j^1}^1)+\sum_{k=1}^{N_t^2}r(Y^2_k,u_{T_k^2}^2),
\end{equation}
 where $x:=x_1-x_2$. We denote the common jump process by $N=N^1+N^2$ with jump intensity $\lambda=\lambda_1+\lambda_2$. Thus, $N_t\sim$ Pois$(\lambda t)$ with jump times $\{T_j\}_{j \in \mathbb{N}}$. The definition of the jump times remains consistent with \eqref{eq:defJumptimes}.
Let $a,b$ be real numbers with $a<x<b$ and \begin{equation*}
     \tau^{u_1 u_2}_x=\inf\left\lbrace t>0: X_t^{u_1u_2} \notin [a,b] \vert X_0=x \right\rbrace.
\end{equation*}
We consider performance functionals of the form
\begin{align}
\label{functional}
	J^{u_1 u_2}(x)=\mathbb{E}_x  \left[  \int_{0}^{\tau^{u_1u_2}}e^{-\delta t}\zeta(X_t^{u_1u_2})\diff t + e^{-\delta\tau^{u_1u_2}} h(X_{\tau^{u_1u_2}}^{u_1u_2})  \right],
\end{align}
where $\delta>0$ and $\zeta: [a,b]\rightarrow\mathbb{R}_0^+$ is a measurable and Lipschitz continuous function. Further, $h:\mathbb{R} \rightarrow \mathbb{R}$ is measurable, Lipschitz continuous on $\mathbb{R} \backslash (a,b)$ and monotone increasing. Furthermore, it fulfills \begin{equation*}
    \int_{\mathbb{R}} h(y)Q(x,\diff y) =  \int_{\mathbb{R}_0^+}h(x-y) \diff F_Y^1(y) + \int_{\mathbb{R}_0^+}h(x+y) \diff F_Y^2(y) <\infty
\end{equation*} for all $x \in [a,b]$, where $Q$ denotes the jump kernel of the process $X$.\\
Player one wants to maximize, and player two wants to minimize the functional \eqref{functional}, using their respective controls $u_1 \in \mathcal{U}_1$ and $u_2 \in \mathcal{U}_2$. 

While in \cite{TaksarZeng2011}, the set of controls is immediately restricted to Markovian controls, we want to follow another - more general -  approach based on \cite{FlemingSouganadis1989}. 

\begin{definition}
\label{def:admissible strategy}
Let $u$ and $ \tilde{u}$ be two controls. We say that  $u \approx \tilde{u}$ if $u = \tilde{u}$ $\ell \times \mathbb{P}$-a.s. on compact time intervals. \\
    An admissible strategy for player 1 is a mapping $\alpha:\mathcal{U}_2 \rightarrow \mathcal{U}_1$ for which $u_2 \approx \tilde{u}_2$ implies $\alpha[u_2] \approx \alpha[\tilde{u}_2]$. The set of all such strategies is denoted by $\Gamma$.\\
    Equivalently, a mapping $\beta:\mathcal{U}_1 \rightarrow \mathcal{U}_2$ for which $u_1 \approx \tilde{u}_1 $ implies $\beta[u_1] \approx \beta[\tilde{u}_1]$, is called an admissible strategy for player 2. The set of all such strategies is denoted by $\Delta$.
    \end{definition}
With this strategy against control formulation, the lower value of the game is defined as
\begin{equation}
\label{lowergame}
     \underline{V}(x) = \inf_{\beta \in \Delta} \sup_{u_1 \in \mathcal{U}_1} J^{u_1\beta(u_1)}(x),
\end{equation}
and the game's upper value is
\begin{equation}
 \label{uppergame}  
     \overline{V}(x) = \sup_{\alpha \in \Gamma} \inf_{u_2 \in \mathcal{U}_2 } J^{\alpha(u_2) u_2}(x).
\end{equation}

\begin{remark}
    The precise definitions of the functions $h$ and $\zeta$ leave room for interpretation of the game. For example, $h$ correlates to what happens if the surplus of the first player is much larger than the one of the second player or vice versa. A possible interpretation is that if this event occurs, some restructuring of the ``weaker" player occurs and the game restarts, or the two companies merge etc. With $\zeta$, we can model for example running costs or gains but also provide a reason that player 1 might not want to exceed $b$ immediately. All in all, the decisions only depend on a function of the difference of the surplus processes which describes the (possible) dominance of one player over the other. In our setting the two insurers are linked to each other by the shared performance functional and control interactions.
\end{remark}

\section{A-priori properties of the value functions}
We start by showing that the performance functional $J^{u_1u_2}$ for two controls is quite regular. For $u_1 \in \mathcal{U}_1$ and  $u_2 \in \mathcal{U}_2$ which have the form \eqref{predictable}, we write $\phi^{u_1 u_2}(\cdot,\cdot)$ for the function given by
\begin{align*}
\phi^{u_1 u_2}(t,x)=x+\int_0^t(c_1(g^1_k(s,w_{k-1}))-c_2(g^2_k(s,w_{k-1})))\diff s,
\end{align*}
for $t\in[0,S_k)$, $x=X_{T_{k-1}}$ and $w_{k-1}=(S_1,X_{T_1},\ldots,S_{k-1},X_{T_{k-1}})$. The parameters $S_j$ and $X_{T_j}$ correspond to former inter-jump times and respective post-jump locations. The map $t\mapsto\phi^{u_1u_2}(t,x)$ is the deterministic path of the controlled process between to consecutive jump times.
\begin{lemma}
\label{Lemma:J bounded and continuous}
    The function $t\mapsto J^{u_1 u_2}(\phi^{u_1u_2}(t,x))$ is bounded and continuous for any controls $u_1 \in \mathcal{U}_1$ and $u_2 \in \mathcal{U}_2$.
\end{lemma}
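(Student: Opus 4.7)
The proof naturally splits into boundedness and continuity parts.

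For \textbf{boundedness}, I would establish a uniform bound on $J^{u_1u_2}(y)$ as $y$ ranges over any bounded subset of $\mathbb{R}$ (in particular, over the range of $\phi^{u_1u_2}(\cdot,x)$ on a bounded $t$-interval). The running contribution is immediately controlled:
\begin{equation*}
    \mathbb{E}_y\Bigl[\int_0^{\tau^{u_1u_2}} e^{-\delta s}\,|\zeta(X_s^{u_1u_2})|\diff s\Bigr] \le \delta^{-1}\sup_{z\in[a,b]}|\zeta(z)|,
\end{equation*}
since $\zeta$ is continuous on the compact interval $[a,b]$. For the terminal contribution $\mathbb{E}_y[e^{-\delta\tau}h(X_{\tau}^{u_1u_2})]$ I would split according to the exit mode. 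A continuous exit yields $X_\tau \in \{a,b\}$, for which $|h|$ is directly bounded. A jump exit from a pre-jump position $z\in[a,b]$ yields $X_\tau = z \mp r(Y,\cdot)$ with $r(Y,\cdot)\in[0,Y]$; by the monotonicity of $h$ one majorises $|h(z\mp r(Y,\cdot))|$ by $|h(a-Y)| + h(b+Y) + \max(|h(a)|,|h(b)|)$, whose expectation is finite by the integrability assumption $\int_{\mathbb{R}}h(y)Q(x,\diff y)<\infty$ applied at the endpoints $x=a,b$. Multiplied by $\mathbb{E}[e^{-\delta\tau}]\le 1$, this yields a uniform bound on the terminal part.

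For \textbf{continuity}, I would exploit the PDMP integral representation that arises from conditioning on the first jump time $T_1\sim\mathrm{Exp}(\lambda)$. Writing $\tau^*(y)$ for the deterministic exit time of $\phi^{u_1u_2}(\cdot,y)$ from $[a,b]$, one obtains
\begin{equation*}
    J^{u_1u_2}(y) = \int_0^{\tau^*(y)} e^{-(\delta+\lambda)s}\bigl[\zeta(\phi^{u_1u_2}(s,y)) + \lambda (KJ)(s,y)\bigr]\diff s + e^{-(\delta+\lambda)\tau^*(y)}h\bigl(\phi^{u_1u_2}(\tau^*(y),y)\bigr),
\end{equation*}
where $(KJ)(s,y)$ denotes the expected continuation value immediately after a jump at time $s$. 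Specialising $y = \phi^{u_1u_2}(t,x)$ and exploiting the flow property within the inter-jump window $[0,S_k)$, subtracting the identities at times $t_1 < t_2$ leads to
\begin{equation*}
    e^{-(\delta+\lambda)t_1}J^{u_1u_2}(\phi^{u_1u_2}(t_1,x)) - e^{-(\delta+\lambda)t_2}J^{u_1u_2}(\phi^{u_1u_2}(t_2,x)) = \int_{t_1}^{t_2} e^{-(\delta+\lambda)s}\bigl[\zeta(\phi^{u_1u_2}(s,x)) + \lambda(KJ)(s,x)\bigr]\diff s.
\end{equation*}
By the boundedness step, the integrand on the right is bounded, so $t\mapsto e^{-(\delta+\lambda)t}J^{u_1u_2}(\phi^{u_1u_2}(t,x))$ is absolutely continuous on $[0,S_k)$; multiplying back by the smooth factor $e^{(\delta+\lambda)t}$ transfers continuity to $t\mapsto J^{u_1u_2}(\phi^{u_1u_2}(t,x))$.

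The main obstacle is controlling the jump operator $(KJ)(s,y)$: its finiteness and its uniform boundedness (needed for dominated convergence and for the integral equation to be well posed) rely precisely on the monotonicity and integrability hypotheses on $h$ established in the boundedness step, applied to the range of post-jump positions along the flow. Additional care is needed near $\partial[a,b]$, where $\tau^*(y)\to 0$ and the terminal term dominates; continuity of $J^{u_1u_2}(\phi^{u_1u_2}(t,x))$ across such transition points is secured by the Lipschitz continuity of $h$ on $\mathbb{R}\setminus(a,b)$.
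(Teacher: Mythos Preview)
Your argument is correct and rests on the same device as the paper --- conditioning on the first jump time $T_1$ --- but packages it differently. The paper proceeds by a direct small-time estimate: it splits on $\{T_1>t^*\}$ versus $\{T_1\le t^*\}$, applies the Markov property to obtain $J^{\tilde u_1\tilde u_2}(\phi(t^*,x))$ with the shifted controls $\tilde u^i_s=u^i_{t^*+s}$, and bounds the difference $|J^{u_1u_2}(x)-J^{\tilde u_1\tilde u_2}(\phi(t^*,x))|$ by $M\bigl(2-e^{-(\lambda_1+\lambda_2)t^*}(e^{-\delta t^*}+1)\bigr)+\|\zeta\|_\infty t^*$, which tends to $0$. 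You instead write down the full PDMP integral equation and subtract it at two times to get absolute continuity of $t\mapsto e^{-(\delta+\lambda)t}J(\phi(t,x))$ directly. Your route is more structural and delivers in one stroke the absolute continuity that the paper records separately as a Corollary; the paper's route is more elementary and sidesteps any verification of the flow identity. On the boundedness side, the paper simply invokes $M_h:=\sup_{x\in[a,b]}\int|h(y)|Q(x,\diff y)<\infty$ without further comment, whereas your monotonicity argument for $h$ makes this explicit.

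One point you should spell out: when you specialise $y=\phi^{u_1u_2}(t,x)$ in the integral equation, the controls appearing in $J^{u_1u_2}(y)$ must be the time-shifted ones $\tilde u^i_s=u^i_{t+s}$ for the semigroup identity $\phi(s,\phi(t,x))=\phi(s+t,x)$ and the cancellation of boundary terms to go through exactly. The paper handles this with its closing sentence (``Since $u_i$ and $\tilde u_i$ correspond to the same function $g^i$''); you allude to it with ``exploiting the flow property'' but should make the shift explicit.
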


\begin{proof}
    Let $x \in (a,b)$ and $u_1 \in \mathcal{U}_1, u_2 \in \mathcal{U}_2$.
    We start with the boundedness. Let $M_h := \sup_{x \in [a,b]} \int_{0}^{\infty}\vert h(y)\vert Q(x,\diff y)$. Then, 
    \begin{align*}
		\vert J^{u_1 u_2}(x) \vert		&\leq \Vert \zeta \Vert_{\infty} \mathbb{E}_x \left[  \int_{0}^{\tau^{u_1 u_2}} e^{-\delta t} \diff t\right]  + \mathbb{E}_x \left[ \vert h(X_{\tau^{u_1u_2}}^{u_1u_2}) \vert\right] \\
										&\leq \frac{\Vert \zeta \Vert_{\infty}}{\delta} + M_h =: M.
	\end{align*}
 For the continuity, we need the form of a predictable process \eqref{predictable}, which gives us the following: For some time $t \in \mathbb{R}_0^+$, there exist measurable functions $g^1$ and $g^2$ such that $u_t^1 = g^1(t,x)$ and $u_t^2=g^2(t,x)$ until the first jump occurs. Let $t^*$ be such that the process does not reach the boundary continuously before this time. Then,
 \begin{align*}
     J^{u_1u_2}(x)&= \mathbb{P}(T_1>t^*) \left(  \int_{0}^{t^*}  e^{-\delta t} \zeta\left(x+\int_{0}^{t}c_1(g^1(s,x))-c_2(g^2(s,x)) \diff s\right) \diff t  \right. \\
     &\qquad \left. + \mathbb{E}_x \left[ \int_{t^*}^{\tau^{u_1 u_2}} e^{-\delta t} \zeta(X_t^{u_1 u_2}) \diff t + e^{- \delta \tau^{u_1 u_2}} h(X_{\tau^{u_1 u_2}}^{u_1u_2}) \Bigg \vert T_1>t^* \right] \right) \\
     &+\mathbb{P}(T_1\leq t^*) \mathbb{E}_x \left[ \int_{0}^{\tau^{u_1 u_2}} e^{-\delta t} \zeta(X_t^{u_1 u_2}) \diff t + e^{- \delta \tau^{u_1 u_2}} h(X_{\tau^{u_1 u_2}}^{u_1u_2}) \Bigg \vert T_1\leq t^* \right].
 \end{align*}
 By using the tower and the Markov property, we get that
 \begin{align*}
     J^{u_1u_2}(x)&= \mathbb{P}(T_1>t^*) \left( \int_{0}^{t^*}  e^{-\delta t} \zeta\left(x+\int_{0}^{t}c_1(g^1(s,x))-c_2(g^2(s,x)) \diff s\right) \diff t  \right. \\
     &\qquad \left. + e^{-\delta t^*} J^{\tilde{u}_1 \tilde{u}_2}\left(x+ \int_{0}^{t^*}c_1(g^1(t,x))-c_2(g^2(t,x))\diff t \right) \right)\\
      &+\mathbb{P}(T_1\leq t^*) \mathbb{E}_x \left[ \int_{0}^{\tau^{u_1 u_2}} e^{-\delta t} \zeta(X_t^{u_1 u_2}) \diff t + e^{- \delta \tau^{u_1 u_2}} h(X_{\tau^{u_1 u_2}}^{u_1u_2}) \Bigg \vert T_1\leq t^* \right],
 \end{align*}
 where $\tilde{u}_t^i = u_{t^*+t}^i, i=1,2$. Thus,
 \begin{align*}
     \Bigg \vert J^{u_1u_2}(x) - J^{\tilde{u}_1 \tilde{u}_2}\left(x+ \int_{0}^{t^*}c_1(g^1(t,x))-c_2(g^2(t,x))\diff t \right) \Bigg \vert\\
     \leq \left\vert\mathbb{P}(T_1>t^*)e^{-\delta t^*}-1\right\vert M + \Vert\zeta \Vert_{\infty}t^*+ \mathbb{P}(T_1\leq t^*) M \\= M\left(2-e^{-(\lambda_1+\lambda_2)t^*} (e^{-\delta t^*}+1)\right) + \Vert\zeta \Vert_{\infty}t^*.
 \end{align*}
 For $t^* \rightarrow 0$ the r.h.s. converges to $0$. Since $u_i$ and $\tilde{u}_i$ correspond to the same function $g^i$ for $i=1,2$, we are done.
\end{proof}

\begin{corollary}
The above proof also shows that $J^{u_1u_2}$ is absolutely continuous along the ODE path, independent of $u_1$ and $u_2$. Therefore, it is in the domain of the generator of $X^{u_1 u_2}$. 
\end{corollary}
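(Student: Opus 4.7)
The plan is to extract a \emph{quantitative} Lipschitz estimate from the closing bound in the proof of Lemma~\ref{Lemma:J bounded and continuous} and then feed it into Davis' generator-domain characterisation for PDMPs.

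First, I would reread the final displayed inequality of that proof. Writing $\lambda:=\lambda_1+\lambda_2$, its right-hand side
\begin{equation*}
f(t^*):=M\bigl(2-e^{-\lambda t^*}(e^{-\delta t^*}+1)\bigr)+\|\zeta\|_\infty\, t^*
\end{equation*}
vanishes at $t^*=0$ and has derivative $f'(t^*)=M\bigl((\lambda+\delta)e^{-(\lambda+\delta)t^*}+\lambda e^{-\lambda t^*}\bigr)+\|\zeta\|_\infty$, which is monotone decreasing and bounded by $L:=M(2\lambda+\delta)+\|\zeta\|_\infty$. Hence $f(t^*)\leq L\, t^*$ for all $t^*\geq 0$, so the map $t\mapsto J^{\tilde{u}_1\tilde{u}_2}(\phi^{u_1 u_2}(t,x))$, with $\tilde{u}_i$ the $t$-shift of $u_i$, is $L$-Lipschitz on compact time intervals. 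Since the constants $M$, $\lambda$, $\delta$, $\|\zeta\|_\infty$ depend only on the model, the Lipschitz constant is independent of the controls, giving the claimed absolute continuity along the ODE path.

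Second, for membership in the domain I would invoke the characterisation of the extended generator of a PDMP in \cite{Davis} (Theorem~(26.14)): a bounded measurable function belongs to the domain of the extended generator of $X^{u_1 u_2}$ as soon as it is absolutely continuous along every deterministic flow segment between consecutive jumps and its jump-kernel integral is finite. The former is exactly what the previous step supplies, while the latter is immediate from the uniform boundedness of $J^{u_1 u_2}$ (Lemma~\ref{Lemma:J bounded and continuous}) together with the standing hypothesis $\int h(y)\, Q(x,\diff y)<\infty$ for $x\in[a,b]$.

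The main obstacle is bookkeeping rather than analysis: one must verify that the shifted controls $\tilde{u}_1,\tilde{u}_2$ appearing in the lemma's bound are really the continuation of $(u_1,u_2)$ along its own flow $\phi^{u_1 u_2}$, so that the Lipschitz estimate controls the increments of $J^{u_1 u_2}$ along the correct ODE path and not along an extraneous trajectory. Once this identification is made, the universality of $L$ in $(u_1,u_2)$ follows automatically and no further estimation is required.
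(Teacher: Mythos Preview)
Your proposal is correct and fleshes out what the paper merely asserts: the corollary carries no separate proof in the paper, only the remark that it follows from the preceding argument. Your extraction of the uniform Lipschitz constant $L=M(2\lambda+\delta)+\|\zeta\|_\infty$ from the closing estimate, followed by the invocation of Davis' domain characterisation (Theorem~(26.14) in \cite{Davis}), is precisely the reasoning the paper intends but leaves to the reader. The bookkeeping point you flag---that the shifted controls $\tilde{u}_i$ must coincide with the continuation of $(u_1,u_2)$ along the flow---is exactly what the last sentence of the lemma's proof addresses (``$u_i$ and $\tilde{u}_i$ correspond to the same function $g^i$''), so no additional work is needed there.
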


\begin{lemma}
$\underline{V}$ and $\overline{V}$ are bounded and continuous.
\end{lemma}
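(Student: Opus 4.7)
Boundedness is immediate: by Lemma~\ref{Lemma:J bounded and continuous} we have $|J^{u_1u_2}(x)|\le M:=\Vert\zeta\Vert_{\infty}/\delta+M_h$ uniformly in $x\in[a,b]$ and in $(u_1,u_2)\in\mathcal{U}_1\times\mathcal{U}_2$, and this uniform bound is preserved by the $\inf$--$\sup$ and $\sup$--$\inf$ in~\eqref{lowergame}--\eqref{uppergame}, giving $\Vert\underline{V}\Vert_\infty\vee\Vert\overline{V}\Vert_\infty\le M$.

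For continuity I will focus on $\underline{V}$; the argument for $\overline{V}$ will be entirely symmetric in the roles of the two players. The plan is to lift the ODE-continuity estimate from the end of the proof of Lemma~\ref{Lemma:J bounded and continuous} to the level of the value function through a strategy-splicing argument. Fix $x,x'\in(a,b)$ with, say, $x<x'$, and let $\varepsilon>0$. I will pick $\beta_\varepsilon\in\Delta$ satisfying $\sup_{u_1\in\mathcal{U}_1}J^{u_1\beta_\varepsilon(u_1)}(x')\le\underline{V}(x')+\varepsilon$, choose a small time $t^*>0$, and a pair $(a_1^*,a_2^*)\in A_1\times A_2$ with $c_1(a_1^*)-c_2(a_2^*)=(x'-x)/t^*$; compactness of $A_1,A_2$ and continuity of $c_1,c_2$ make this feasible as long as $(x'-x)/t^*$ lies in the image of the drift map. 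I will then define $\tilde\beta\in\Delta$ to play $a_2^*$ on $[0,t^*]$ and the time-shifted $\beta_\varepsilon$ afterwards, and absorb player~1's freedom on $[0,t^*]$ into the outer $\sup$.

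The core estimate will come from conditioning on $\{T_1>t^*\}$ versus $\{T_1\le t^*\}$ and applying the Markov property of the PDMP, exactly as in the proof of Lemma~\ref{Lemma:J bounded and continuous}: on the no-jump event the $x$-started process under the spliced control reaches $x'$ at time $t^*$, the pre-$t^*$ running cost is at most $\Vert\zeta\Vert_{\infty}t^*$, and the jump event contributes at most $M(1-e^{-\lambda t^*})$. Combining and taking $\sup$ over $u_1$ gives
\[
\underline{V}(x)\le\underline{V}(x')+\varepsilon+M\bigl(2-e^{-\lambda t^*}(e^{-\delta t^*}+1)\bigr)+\Vert\zeta\Vert_{\infty}t^*.
\]
Letting $\varepsilon\downarrow 0$, choosing $t^*=|x-x'|/D$ for $D$ an achievable drift magnitude, and rerunning the argument with $x,x'$ swapped (using a drift of the opposite sign) will then yield $|\underline{V}(x)-\underline{V}(x')|=O(|x-x'|)$, in particular continuity.

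The hard part will be turning this sketch into a rigorous argument. One has to exhibit measurable $g_k^i$-representations for the spliced $\tilde\beta$ and verify the $\approx$-equivariance of Definition~\ref{def:admissible strategy}, and -- more delicately -- one has to actually absorb player~1's freedom on $[0,t^*]$ so that the terminal position of the steering phase equals $x'$; this can be done either by allowing $a_2^*$ to depend measurably on the realized $u_1|_{[0,t^*]}$ (when the range of $c_2$ permits it) or by treating the $O(t^*)$-deviation through the same Markov bookkeeping. A secondary point is the mild non-degeneracy assumption on the range of $c_1(A_1)-c_2(A_2)$ guaranteeing that steering is possible in both directions; without it only one-sided continuity follows from the above, and the other side must be argued by the symmetric construction starting from $x'$.
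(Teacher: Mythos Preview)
Your steering approach is genuinely different from the paper's, and the gaps you yourself flag are real and, as stated, not closable under the paper's hypotheses.

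The paper does not steer at all. It uses a \emph{coupling} argument that exploits Definition~\ref{def:admissible strategy} in a crucial way: an admissible strategy $\beta\in\Delta$ is a map $\mathcal{U}_1\to\mathcal{U}_2$ that depends on the control $u_1$ but \emph{not} on the initial state. Hence, if you pick an $\varepsilon$-optimal strategy $\beta^y$ for $\underline{V}(y)$ and an $\varepsilon$-optimal control $u^x$ for $\underline{V}(x)$ (given $\beta^y$), then $(u^x,\beta^y(u^x))$ is the same pair of processes whether the initial state is $x$ or $y$. For a fixed $\omega$ the two trajectories are therefore exact translates of each other by $x-y$ until one of them exits $[a,b]$. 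This immediately gives
\[
\underline{V}(x)-\underline{V}(y)\le \frac{L_\zeta}{\delta}|x-y|+L_h|x-y|+2M\,\mathbb{P}(\tau_x\neq\tau_y)+2\varepsilon,
\]
and $\mathbb{P}(\tau_x\neq\tau_y)\to 0$ as $|x-y|\to 0$ since the translated exit region differs from the original one by a strip of width $|x-y|$. No assumption on the range of the drift is needed, and there is nothing to splice.

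By contrast, your steering construction asks player~2 to drive the state from $x$ to $x'$ in time $t^*$, but player~1 controls part of the drift and is maximizing, so the terminal position after the no-jump phase is not $x'$ but some $\tilde{x}$ depending on player~1's choice on $[0,t^*]$. Your first fix (let $a_2^*$ react to $u_1|_{[0,t^*]}$ so as to cancel it) requires that $c_2(A_2)$ contain an interval of length at least $\mathrm{diam}\,c_1(A_1)$, which is not assumed anywhere in the paper. Your second fix (accept an $O(t^*)$ deviation and bookkeep it) leads you to bound $\sup_{u_1}J^{u_1\beta_\varepsilon(u_1)}(\tilde{x})-\sup_{u_1}J^{u_1\beta_\varepsilon(u_1)}(x')$, i.e.\ continuity in the initial point of the very object whose continuity you are trying to establish; Lemma~\ref{Lemma:J bounded and continuous} only gives continuity of $t\mapsto J^{u_1u_2}(\phi^{u_1u_2}(t,x))$ along the controlled flow, not continuity in the initial condition uniformly over controls. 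The clean way to get that uniform continuity is precisely the coupling argument above, which then makes the steering detour unnecessary.
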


\begin{proof}
The boundedness follows directly from Lemma \ref{Lemma:J bounded and continuous}.
For the continuity, we start with $\underline{V}$. 
Let $x>y$ and consider the difference $\underline{V}(x)-\underline{V}(y)$, 
    \begin{align*}
         \underline{V}(x)-\underline{V}(y) =\inf_{\beta \in \Delta} \sup_{u_1 \in \mathcal{U}_1}J^{u_1 \beta(u_1)}(x)- \inf_{\beta \in \Delta} \sup_{u_1 \in \mathcal{U}_1}J^{u_1 \beta(u_1)}(y).
    \end{align*}
For some $\varepsilon>0$, let $\beta^y$ be an admissible $\varepsilon$-optimal strategy corresponding to $\underline{V}(y)$, and $u^x$ be an admissible $\varepsilon$-optimal control corresponding to $\underline{V}(x)$. We denote by  $X^x$ and $X^y$ the controlled processes starting in $x$ and $y$ respectively, with corresponding exit times $\tau_x$ and $\tau_y$. Notice, that $u^x$ is also an admissible control for the process starting in $y$. From Definition \ref{def:admissible strategy}, the answering strategy $\beta^y(u^x)$ is the same for both processes. Consequently, for a fixed $\omega$ the two processes move parallel until time $\tau_x \wedge \tau_y$. We get
    \begin{align*}
        &\underline{V}(x)-\underline{V}(y) \leq \mathbb{E}\left[\int_{0}^{\tau_x \wedge \tau_y} e^{-\delta t}(\zeta(X_t^x)-\zeta(X_t^y)) \diff t\right] \\
        & \quad+ \mathbb{E}\left[\int_{\tau_y}^{\tau_x}e^{-\delta t}\zeta(X_t^x)\diff t \ \ind{\{\tau_y<\tau_x\}}\right]
        - \mathbb{E}\left[\int_{\tau_x}^{\tau_y}e^{-\delta t}\zeta(X_t^y)\diff t \ \ind{\{\tau_x<\tau_y\}}\right]\\
        & \quad+ \mathbb{E}\left[ e^{-\tau_x}h(X_{\tau_x}^x)-e^{-\tau_y}h(X_{\tau_y}^y)\right] + 2 \varepsilon\\
        & \quad \leq \int_0^{\infty}e^{-\delta t} L_\zeta (x-y) \diff t +L_h|x-y|
        + 2\,M\left(\mathbb{P}(\tau_y < \tau_x) + \mathbb{P}(\tau_x < \tau_y)\right) + 2 \varepsilon.
    \end{align*}
    Let $T_x:=\inf\{t \geq 0: X_t^x \leq a+(x-y) \vee X_t^x \geq b+(x-y)$\}. 
    Then, $\mathbb{P}(\tau_y < \tau_x) = \mathbb{P}(T_x<\tau_x)$,
    which goes to zero if $y \rightarrow x$.
    With a similar argument, $\mathbb{P}(\tau_x < \tau_y)$ goes to zero too.\\
    The same holds for $\underline{V}(y)-\underline{V}(x)$ by choosing suitable controls and strategies. Therefore, $|\underline{V}(x)-\underline{V}(y)|\rightarrow 0$ as $|x-y|\rightarrow 0$.\\
    Now, we consider
    \begin{equation*}
        \overline{V}(x)-\overline{V}(y) =\sup_{\alpha \in \Gamma} \inf_{u_2 \in U_2}J^{\alpha(u_2) u_2}(x)- \sup_{\alpha \in \Gamma} \inf_{u_2 \in U_2}J^{\alpha(u_2) u_2}(y)
    \end{equation*}
By choosing an admissible $\varepsilon$-optimal strategy $\alpha^x$ corresponding to $\overline{V}(x)$ and an admissible $\varepsilon$-optimal control $u^y$ corresponding to $\overline{V}(y)$ we get by similar arguments as before, that $|\overline{V}(x)-\overline{V}(y)|\rightarrow 0$ as $|x-y|\rightarrow 0$. 
\end{proof}

\begin{remark}
One may notice that in the following theorem we do not need to deal with so-called \emph{r-strategies} as elaborated in \cite{FlemingSouganadis1989}. This is because of the natural form of predictable processes with respect to the pure jump filtration, see \eqref{predictable}.
\end{remark}
\newpage
\begin{theorem}[Dynamic programming principle]
\label{thm:DPP}
 Under our model assumptions, the following holds for any $\{\mathcal {F}_t\}_{t\geq 0}$ bounded stopping time $T$:
  \begin{align}
        \label{DPPlower}
        \underline{V}(x)= &\inf_{\beta \in \Delta} \sup_{u_1 \in \mathcal{U}_1} \mathbb{E}_x \Biggl[ \int_{0}^{\tau^{u_1\beta(u_1)}\wedge T}e^{-\delta s} \zeta\left(X_s^{u_1\beta(u_1)}\right) \diff s \\
        &+ e^{-\delta T} \underline{V}\left(X^{u_1\beta(u_1)}_{T}\right) \ind{\{T<\tau^{u_1\beta(u_1)}\}}+ e^{-\delta \tau^{u_1\beta(u_1)}}h(X_{\tau^{u_1\beta(u_1)}}^{u_1\beta(u_1)}) \ind{\{T\geq\tau^{u_1\beta(u_1)}\}} \Biggr],  \nonumber\\
        \overline{V}(x)= &\sup_{\alpha \in \Gamma} \inf_{u_2 \in \mathcal{U}_2 } \mathbb{E}_x \Biggl[ \int_{0}^{\tau^{\alpha(u_2)u_2}\wedge T}e^{-\delta s} \zeta\left(X_s^{\alpha(u_2)u_2}\right) \diff s \label{DPPupper}\\ &+ e^{-\delta T} \overline{V}\left(X^{\alpha(u_2)u_2}_{T}\right) \ind{\{T<\tau^{\alpha(u_2)u_2}\}}  +  e^{-\delta \tau^{\alpha(u_2)u_2}} h(X^{\alpha(u_2)u_2}_{\tau^{\alpha(u_2)u_2}}) \ind{\{T\geq\tau^{\alpha(u_2)u_2}\}} \Biggr].\nonumber  
    \end{align}
\end{theorem}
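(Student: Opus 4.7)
The plan is to prove \eqref{DPPlower} and \eqref{DPPupper} via the standard Fleming--Souganidis two-inequality scheme. The case of $\overline V$ is entirely symmetric to that of $\underline V$ (swap $\sup/\inf$ and the roles of $\Gamma,\Delta$ and $\mathcal U_1,\mathcal U_2$), so I focus on $\underline V$ and let $W(x)$ denote the right-hand side of \eqref{DPPlower}. Both inequalities rest on three ingredients: the strong Markov property of the underlying PDMP at the bounded $\{\mathcal F_t\}$-stopping time $T$; concatenation at $T$ of controls and strategies, which preserves admissibility thanks to the predictable parametrisation \eqref{predictable}; and an $\varepsilon$-optimal measurable selection, handled via continuity of $\underline V$ together with a finite partition of $[a,b]$.

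For the inequality $\underline V(x)\le W(x)$, fix $\varepsilon>0$ and use uniform continuity of $\underline V$ on $[a,b]$ to build a family $\{\beta^y\}_{y\in[a,b]}\subset\Delta$, piecewise constant in $y$, satisfying $\sup_{v_1}J^{v_1\beta^y(v_1)}(y)\le \underline V(y)+\varepsilon$. For an arbitrary $\beta\in\Delta$, define a concatenated strategy $\tilde\beta\in\Delta$ by following $\beta$ on $[0,T]$ and, conditional on the jump history at $T$, continuing with the time-shift of $\beta^{X_T}$; admissibility of $\tilde\beta$ is read directly from \eqref{predictable}. The Markov property at $T$, applied pathwise, then shows that $J^{u_1\tilde\beta(u_1)}(x)$ is bounded above by the expectation in the bracket of \eqref{DPPlower} computed with $\beta$, plus an additive error of order $\varepsilon$ uniform in $u_1$. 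Since $\underline V(x)\le \sup_{u_1}J^{u_1\tilde\beta(u_1)}(x)$ by definition, taking $\sup_{u_1}$, then $\inf_{\beta\in\Delta}$, and letting $\varepsilon\downarrow 0$ gives $\underline V(x)\le W(x)$.

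The converse $W(x)\le \underline V(x)$ is dual. For any $\beta\in\Delta$ and $u_1\in\mathcal U_1$, the Markov property at $T$ decomposes $J^{u_1\beta(u_1)}(x)$ into the pre-$T$ running cost plus, on $\{T<\tau^{u_1\beta(u_1)}\}$, a discounted continuation value $J^{u_1^{\ge T}\bar\beta_\omega(u_1^{\ge T})}(X_T)$, where $\bar\beta_\omega$ is the shifted strategy induced by $\beta$ once the jump history up to $T$ is frozen, and $\bar\beta_\omega\in\Delta$. Since $\sup_{v_1}J^{v_1\bar\beta_\omega(v_1)}(X_T(\omega))\ge \underline V(X_T(\omega))$ for every $\omega$, a measurable selection (again supported by continuity of $J$ and $\underline V$ and a finite-partition argument) produces $v_1^\omega\in\mathcal U_1$ with $J^{v_1^\omega\bar\beta_\omega(v_1^\omega)}(X_T(\omega))\ge\underline V(X_T(\omega))-\varepsilon$. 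Splicing $v_1^\omega$ onto the pre-$T$ restriction of $u_1$ via \eqref{predictable} yields an admissible $\hat u_1\in\mathcal U_1$ whose functional $J^{\hat u_1\beta(\hat u_1)}(x)$ is bounded below by the expectation in \eqref{DPPlower} with $\beta$, minus $\varepsilon$. Taking $\sup_{u_1}$, then $\inf_\beta$ and sending $\varepsilon\downarrow 0$ gives the bound.

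The main obstacle is verifying that the concatenated controls and strategies remain admissible in the sense of Definition~\ref{def:admissible strategy}, and in particular that the pre-$T$ trajectory is preserved under splicing (the non-anticipativity property). The PDMP framework is exactly what makes this manageable: by \eqref{predictable}, admissibility of a control reduces to measurable dependence of the piecewise functions $\{g_k^i\}$ on the jump history, and a strategy is a map between such families; concatenation at an $\{\mathcal F_t\}$-stopping time $T$ amounts to splicing measurable functions of jump histories. This is precisely the point of the remark preceding the theorem, explaining why the jump filtration obviates the r-strategy machinery of \cite{FlemingSouganadis1989}.
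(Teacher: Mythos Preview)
Your proposal is correct and follows essentially the same Fleming--Souganidis two-inequality scheme as the paper: $\varepsilon$-optimal strategies/controls, a finite partition of $[a,b]$ via continuity of $\underline V$ and $J$, concatenation at $T$, and the Markov property. Two minor bookkeeping differences are worth noting: the paper first proves the statement for deterministic $T$ and then extends to bounded stopping times via the structure of jump-filtration stopping times, whereas you work directly at the stopping time; and in the direction $W\le\underline V$ the paper fixes a single $\varepsilon$-optimal $\hat\beta_\varepsilon\in\Delta$ for $\underline V(x)$ and uses the \emph{same} $\hat\beta_\varepsilon$ for the continuation (so the near-optimal continuation controls $u_{\hat x}$ are chosen against $\hat\beta_\varepsilon$ itself), which sidesteps the need to define and verify admissibility of your shifted strategies $\bar\beta_\omega$.
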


\begin{proof}
We only prove \eqref{DPPlower}, since \eqref{DPPupper} follows similarly.\\
Let $T\geq 0$. We define 
\begin{align*}
    W(x):= &\inf_{\beta \in \Delta} \sup_{u_1 \in \mathcal{U}_1} \mathbb{E}_x \Biggl[ \int_{0}^{\tau^{u_1\beta(u_1)}\wedge T}e^{-\delta s} \zeta\left(X_s^{u_1\beta(u_1)}\right) \diff s \\ &+ e^{-\delta T} \underline{V}\left(X^{u_1\beta(u_1)}_{T}\right) \ind{\{T<\tau^{u_1\beta(u_1)}\}} \nonumber+ e^{-\delta \tau^{u_1\beta(u_1)}}h(X_{\tau^{u_1\beta(u_1)}}^{u_1\beta(u_1)}) \ind{\{T\geq\tau^{u_1\beta(u_1)}\}} \Biggr]. 
\end{align*}
Let $\varepsilon>0$. There exists a strategy $\beta_\varepsilon \in \Delta$ such that
\begin{multline}
    \label{eq:Wineq}
    W(x)\geq  \mathbb{E}_x \Biggl[ \int_{0}^{\tau^{u_1\beta_\varepsilon(u_1)}\wedge T}e^{-\delta s} \zeta\left(X_s^{u_1\beta_\varepsilon(u_1)}\right) \diff s + e^{-\delta T} \underline{V}\left(X^{u_1\beta_\varepsilon(u_1)}_{T}\right) \ind{\{T<\tau^{u_1\beta_\varepsilon(u_1)}\}} \\+ e^{-\delta \tau^{u_1\beta_\varepsilon(u_1)}}h(X_{\tau^{u_1\beta_\varepsilon(u_1)}}^{u_1\beta_\varepsilon(u_1)}) \ind{\{T\geq\tau^{u_1\beta_\varepsilon(u_1)}\}} \Biggr] - \varepsilon,
\end{multline}
for all $u_1 \in \mathcal{U}_1$.\\
Furthermore, for each $\hat{x} \in [a,b] $ there exists a $\beta_{\hat{x}} \in \Delta $ s.t.,
\begin{equation}
\label{eq:Vepsilonstrat}
     \underline{V}(\hat{x})>\sup_{u_1 \in \mathcal{U}_1}J^{u_1 \beta_{\hat{x}}(u_1)}(\hat{x})-\varepsilon.
\end{equation}
Since $\underline{V}$ is continuous and for fixed $u_1$ and $\beta$, $J^{u_1 \beta(u_1)}$ is continuous too (for fixed $t\mapsto u_1(t)=(u^1_t,u^2_t)$), we can choose a grid $a=x_0<x_1<\dots<x_{n-1}<x_n=b$, s.t. for $u_1 \in \mathcal{U}_1$ and $\beta \in \Delta$,
\begin{equation}
\label{eq:Jineq}
    \vert J^{u_1 \beta(u_1)}(x_{i})-J^{u_1 \beta(u_1)} (x_{i-1})\vert<\varepsilon,
\end{equation}
and
\begin{equation}
\label{eq:Vineq}
   |\underline{V}(x_{i})-\underline{V}(x_{i-1})|<\varepsilon,
\end{equation} for $i \in \{1,\dots,n\}$. \\
For $u_1 \in \mathcal{U}_1$ and $s\geq 0$, we define
\begin{equation}
\label{eq:newbeta}
    \beta(u_1)(\omega)_s = \begin{cases} \beta_\varepsilon(u_1)(\omega)_s & s\leq T,\\
    \sum_{i=1}^n \mathbbm{1}_{[x_{i-1},x_i)}(X_T ^{u_1 \beta_\varepsilon(u_1)}) \beta_{x_{i-1}}(\tilde{u}_1)(\tilde{\omega})_{s-T} & s> T,
    \end{cases}
\end{equation}
where $\tilde{u}_1(t)=u_1(T+t)$, $\omega=(U_1(\omega),U_2(\omega),\dots)$ and $\tilde{\omega} = (U_{2m-1}(\omega),U_{2m}(\omega),\dots)$ s.t. $T_{m-1}(\omega)\leq T<T_m(\omega)$, according to \eqref{eq:defJumptimes}. 
From \eqref{eq:newbeta} we get:
\begin{align*}
    &J^{u_1 \beta(u_1)} (x)  =\mathbb{E}_x  \left[  \int_{0}^{\tau^{u_1\beta(u_1)}}e^{-\delta s}\zeta(X_s^{u_1\beta(u_1)})\diff s + e^{-\delta\tau^{u_1\beta(u_1)}} h(X_{\tau^{u_1\beta(u_1)}}^{u_1\beta(u_1)})  \right] \\
                           &  = \mathbb{E}_x  \Biggl[  \int_{0}^{\tau^{u_1\beta(u_1)}\wedge T}e^{-\delta s}\zeta(X_s^{u_1\beta(u_1)})\diff s  + e^{-\delta \tau^{u_1\beta(u_1)}}h(X_{\tau^{u_1\beta(u_1)}}^{u_1\beta(u_1)}) \\&  +\sum_{i=1}^n \mathbbm{1}_{[x_{i-1},x_i)}(X_T ^{u_1 \beta_\varepsilon(u_1)}) e^{-\delta T} \left( \int_{T}^{\tau^{u_1\beta(u_1)}}e^{-\delta (s-T)}\zeta(X_s^{u_1\beta(u_1)})\diff s  \right) \ind{\{T<\tau^{u_1\beta(u_1)}\}}
                            \Biggr].
\end{align*}
Using the tower property of conditional expectations yields,
\begin{align*}
                            &J^{u_1 \beta(u_1)} (x)  = \mathbb{E}_x  \Bigg[  \int_{0}^{\tau^{u_1\beta_\varepsilon(u_1)}\wedge T}e^{-\delta s}\zeta(X_s^{u_1\beta_\varepsilon(u_1)})\diff s \\
                            & \quad+ e^{-\delta \tau^{u_1\beta_\varepsilon(u_1)}}h(X_{\tau^{u_1\beta_\varepsilon(u_1)}}^{u_1\beta_\varepsilon(u_1)}) \ind{\{T\geq\tau^{u_1\beta_\varepsilon(u_1)}\}}\\
                            & \quad+ e^{-\delta T} \left.\sum_{i=1}^n \mathbbm{1}_{[x_{i-1},x_i)}(X_{T}^{u_1\beta_\varepsilon(u_1)})  \mathbb{E}_x\left[\left( \int_{T}^{\tau^{u_1\beta(u_1)}}e^{-\delta( s-T)}\zeta(X_s^{u_1\beta(u_1)})\diff s  \right. \right.\right.\\
                            & \quad \left. \left. + e^{-\delta (\tau^{u_1\beta(u_1)}- T)} h\left(X_{\tau^{u_1\beta(u_1)}}^{u_1 \beta(u_1)}\right) \right)\vert \mathcal{F}_T\right] \ind{\{T<\tau^{u_1 \beta_\varepsilon(u_1)}\}}\Bigg]\\
                            &=\mathbb{E}_x  \left[  \int_{0}^{\tau^{u_1\beta_\varepsilon(u_1)}\wedge T}e^{-\delta s}\zeta(X_s^{u_1\beta_\varepsilon(u_1)})\diff s \right. + e^{-\delta \tau^{u_1\beta_\varepsilon(u_1)}}h(X_{\tau^{u_1\beta_\varepsilon(u_1)}}^{u_1\beta_\varepsilon(u_1)}) \ind{\{T\geq\tau^{u_1\beta_\varepsilon(u_1)}\}}\\
                            &\quad +\left.\sum_{i=1}^n \mathbbm{1}_{[x_{i-1},x_i)}(X_{T}^{u_1\beta_\varepsilon(u_1)}) e^{-\delta T} J^{\tilde{u}_1\beta_{x_{i-1}}(\tilde{u}_1)}(X_{T}^{u_1\beta_\varepsilon(u_1)}) \ind{\{T<\tau^{u_1 \beta_\varepsilon(u_1)}\}} \right].
\end{align*}
By \eqref{eq:Vepsilonstrat} and \eqref{eq:Jineq}, we get that for $X_{T}^{u_1\beta_\varepsilon(u_1)}\in [x_i,x_{i+1})$,
\begin {align*}
    \underline{V}(x_i)\geq J^{\tilde{u}_1 \beta_{x_i}(\tilde{u}_1)}(x_i)-\varepsilon \geq J^{\tilde{u}_1 \beta_{x_i}(\tilde{u}_1)}(X_{T}^{u_1\beta_\varepsilon(u_1)})-2 \varepsilon.
\end{align*}
Note that this holds, because given the definition and purpose of $\tilde{u}_1$, the same control (specified for x and its path) is applied in both $J$ functionals above.
Therefore, and by using $\eqref{eq:Vineq}$,
\begin{align*}
    &J^{u_1 \beta(u_1)} (x) \\&\leq \mathbb{E}_x  \left[  \int_{0}^{\tau^{u_1\beta_\varepsilon(u_1)}\wedge T}e^{-\delta s}\zeta(X_s^{u_1\beta_\varepsilon(u_1)})\diff s \right. + e^{-\delta \tau^{u_1\beta_\varepsilon(u_1)}}h(X_{\tau^{u_1\beta_\varepsilon(u_1)}}^{u_1\beta_\varepsilon(u_1)}) \ind{\{T\geq\tau^{u_1\beta_\varepsilon(u_1)}\}}\\
                            &+\left.\sum_{i=1}^n \mathbbm{1}_{[x_{i-1},x_i)}(X_{T}^{u_1\beta_\varepsilon(u_1)}) e^{-\delta T} \underline{V}(x_{i-1}) \ind{\{T<\tau^{u_1 \beta_\varepsilon(u_1)}\}} \right]+2\varepsilon\\
                            & \leq \mathbb{E}_x  \left[  \int_{0}^{\tau^{u_1\beta_\varepsilon(u_1)}\wedge T}e^{-\delta s}\zeta(X_s^{u_1\beta_\varepsilon(u_1)})\diff s \right. + e^{-\delta \tau^{u_1\beta_\varepsilon(u_1)}}h(X_{\tau^{u_1\beta_\varepsilon(u_1)}}^{u_1\beta_\varepsilon(u_1)}) \ind{\{T\geq\tau^{u_1\beta_\varepsilon(u_1)}\}}\\
                            &+\left.\sum_{i=1}^n \mathbbm{1}_{[x_{i-1},x_i)}(X_{T}^{u_1\beta_\varepsilon(u_1)}) e^{-\delta T} \underline{V}(X_{T}^{u_1\beta_\varepsilon(u_1)}) \ind{\{T<\tau^{u_1 \beta_\varepsilon(u_1)}\}} \right]+3\varepsilon.
\end{align*}
Together with \eqref{eq:Wineq},
\begin{equation*}
    J^{u_1 \beta(u_1)} (x) \leq W(x)+4 \varepsilon,
\end{equation*}
and finally
\begin{equation}
\label{eq:v<=W}
    \underline{V}(x)\leq W(x)+4 \varepsilon.
\end{equation}
\ \\
For the other direction, we proceed in a similar way. We use that there exists some $\hat{\beta}_\varepsilon \in \Delta$ such that
\begin{equation}
\label{eq:V>}
    \underline{V}(x) \geq \sup_{u_1 \in \mathcal{U}_1} \mathbb{E}_x  \left[  \int_{0}^{\tau^{u_1\hat{\beta}_\varepsilon(u_1)}}e^{-\delta t}\zeta(X_t^{u_1\hat{\beta}_\varepsilon(u_1)})\diff t + e^{-\delta\tau^{u_1\hat{\beta}_\varepsilon(u_1)}} h(X_{\tau^{u_1\hat{\beta}_\varepsilon(u_1)}}^{u_1\hat{\beta}_\varepsilon(u_1)})  \right] - \varepsilon
\end{equation}
For such a fixed $\hat{\beta}_\varepsilon \in \Delta$, there exists $u_\varepsilon \in \mathcal{U}_1$ such that
\begin{align}
\label{eq:W<}
    W(x)\leq \mathbb{E}_x \Biggl[ \int_{0}^{\tau^{u_\varepsilon\hat{\beta}_\varepsilon(u_\varepsilon)}\wedge T}e^{-\delta s} \zeta\left(X_s^{u_\varepsilon\hat{\beta}_\varepsilon(u_\varepsilon)}\right) \diff s + e^{-\delta  T} \underline{V}\left(X^{u_\varepsilon\hat{\beta}_\varepsilon(u_\varepsilon)}_{T}\right)\ind{\{T<\tau^{u_1\hat{\beta}_\varepsilon(u_1)}\}} \nonumber \\+ e^{-\delta \tau^{u_1\hat{\beta}_\varepsilon(u_1)}}h(X_{\tau^{u_1\hat{\beta}_\varepsilon(u_1)}}^{u_1\hat{\beta}_\varepsilon(u_1)}) \ind{\{T\geq\tau^{u_1\hat{\beta}_\varepsilon(u_1)}\}} \Biggr] + \varepsilon.
\end{align}
Also, for each $\hat{x} \in [a,b]$, it holds that
\begin{align*}
    \underline{V}(\hat{x})\leq \sup_{u_1 \in \mathcal{U}_1}\mathbb{E}_{\hat{x}}\Biggl[ \int_{0}^{\tau^{u_1 \hat{\beta}_\varepsilon(u_1)}}e^{-\delta t} \zeta(X_t^{u_1 \hat{\beta}_\varepsilon(u_1)})\diff t + e^{-\delta \tau^{u_1 \hat{\beta}_\varepsilon(u_1)}} h\left(X_{\tau^{u_1 \hat{\beta}_\varepsilon (u_1)}}^{u_1 \hat{\beta}_\varepsilon(u_1)} \right)\Biggr],
\end{align*}
and therefore there exists $u_{\hat{x}} \in \mathcal{U}_1$ such that
\begin{equation}
\label{eq:V<J}
    \underline{V}(\hat{x})\leq J^{u_{\hat{x}} \hat{\beta}_\varepsilon(u_{\hat{x}})}(\hat{x})+\varepsilon.
\end{equation}
We define $u_1 \in \mathcal{U}_1$ by using the same grid as before, i.e., for $a=x_0<x_1<\dots<x_{n-1}<x_n=b$:

\begin{equation*}
    u_t^1 = 
    \begin{cases} 
    u_t^\varepsilon & t\leq T,\\
    \sum_{i=1}^n \mathbbm{1}_{[x_{i-1},x_i)}(X_T^{u_\varepsilon \hat{\beta}_\varepsilon(u_\varepsilon)}) \ u^{x_{i-1}}_{t-T} & t> T.
    \end{cases}
\end{equation*}
For $X_T^{u_\varepsilon \hat{\beta}_\varepsilon(u_\varepsilon)} \in [x_i,x_{i+1})$, we get by \eqref{eq:Jineq}, \eqref{eq:V<J} and \eqref{eq:Vineq},
\begin{equation}
\label{eq:JgeqJ}
    J^{u_{x_{i}}\hat{\beta}_\varepsilon(u_{x_{i}})}(X_T^{u_\varepsilon \hat{\beta}_\varepsilon(u_\varepsilon)})\geq J^{u_{x_{i}}\hat{\beta}_\varepsilon(u_{x_{i}})}(x_i)-\varepsilon\geq \underline{V}(x_i)-2 \varepsilon \geq \underline{V}(X_T^{u_\varepsilon \hat{\beta}_\varepsilon(u_\varepsilon)})-3\varepsilon.
\end{equation}
By similar computations as before, together with \eqref{eq:JgeqJ} and \eqref{eq:W<},
\begin{align*}
    &J^{u_1 \hat{\beta}_\varepsilon(u_1)}(x)\\ &= \mathbb{E}_x  \left[  \int_{0}^{\tau^{u_\varepsilon\hat{\beta}_\varepsilon(u_\varepsilon)}\wedge T}e^{-\delta s}\zeta(X_s^{u_\varepsilon\hat{\beta}_\varepsilon(u_\varepsilon)})\diff s \right. + e^{-\delta \tau^{u_\varepsilon\hat{\beta}_\varepsilon(u_\varepsilon)}}h(X_{\tau^{u_\varepsilon\hat{\beta}_\varepsilon(u_\varepsilon)}}^{u_\varepsilon\hat{\beta}_\varepsilon(u_\varepsilon)}) \ind{\{T\geq\tau^{u_\varepsilon\hat{\beta}_\varepsilon(u_\varepsilon)}\}}\\
                            & \qquad +\left.\sum_{i=1}^n \mathbbm{1}_{[x_{i-1},x_i)}(X_{T}^{u_\varepsilon\hat{\beta}_\varepsilon(u_\varepsilon)}) e^{-\delta T} J^{u_{x_{i-1}}\hat{\beta}_\varepsilon(u_{x_{i-1}})}(X_{T}^{u_\varepsilon\hat{\beta}_\varepsilon(u_\varepsilon)}) \ind{\{T<\tau^{u_\varepsilon \hat{\beta}_\varepsilon(u_\varepsilon)}\}} \right]\\
                            &\geq \mathbb{E}_x  \left[  \int_{0}^{\tau^{u_\varepsilon\hat{\beta}_\varepsilon(u_\varepsilon)}\wedge T}e^{-\delta s}\zeta(X_s^{u_\varepsilon\hat{\beta}_\varepsilon(u_\varepsilon)})\diff s \right. + e^{-\delta \tau^{u_\varepsilon\hat{\beta}_\varepsilon(u_\varepsilon)}}h(X_{\tau^{u_\varepsilon\hat{\beta}_\varepsilon(u_\varepsilon)}}^{u_\varepsilon\hat{\beta}_\varepsilon(u_\varepsilon)}) \ind{\{T\geq\tau^{u_\varepsilon\hat{\beta}_\varepsilon(u_\varepsilon)}\}}\\
                            & \qquad +\left.\sum_{i=1}^n \mathbbm{1}_{[x_{i-1},x_i)}(X_{T}^{u_\varepsilon\hat{\beta}_\varepsilon(u_\varepsilon)}) e^{-\delta T} \underline{V}(X_{T}^{u_\varepsilon\hat{\beta}_\varepsilon(u_\varepsilon)}) \ind{\{T<\tau^{u_\varepsilon \hat{\beta}_\varepsilon(u_\varepsilon)}\}} \right]-3 \varepsilon\\
                            &\geq W(x)-4\varepsilon.
\end{align*}
Finally, by \eqref{eq:V>},
\begin{equation}
\label{eq:W<=V}
    W(x)\leq \underline{V}(x)+5 \varepsilon.
\end{equation}
Hence, by \eqref{eq:v<=W} and \eqref{eq:W<=V}, 
\begin{equation*}
    \underline{V}(x)=W(x).
\end{equation*}
From this result for a deterministic $T$ one can derive the statement for bounded stopping times, using the particular form of $\{\mathcal{F}_t\}_{t\geq 0}$ stopping times \cite[p. 261]{Davis} and the standard procedure. 
\end{proof}

\section{Characterization of the value functions}
Since for the considered problem is not likely to allow for an explicit solution, we will have to rely on a numerical procedure. That is why we focus on existence and uniqueness of solutions to particular integro-differential equations.

\subsection{Viscosity solutions to Bellmann-Isaacs equations}
In this section we will show that $\underline{V}$ and $\overline{V}$ are viscosity solutions to their associated Bellman-Isaacs equations. 
For this purpose we define for $v$ a smooth function, $u_1 \in A_1$ and $u_2 \in A_2$, the following:
\begin{multline*}
\mathcal{L}(x,v(x),v'(x),v,u_1,u_2):= \left( c_1(u_1)-c_2(u_2)\right)  v'(x)\\
+ \lambda_1 \int_{0}^{\rho(x-a,u_1)}v(x-r(y,u_1)) \diff F_{Y}^1(y)+ \lambda_2 \int_{0}^{\rho(b-x,u_2)}v(x+r(y,u_2)) \diff F_Y^2(y)\\
+ \lambda_1 \int_{\rho(x-a,u_1)}^{\infty} h(x-r(y,u_1)) \diff F_Y^1(y) + \lambda_2 \int_{\rho(b-x,u_2)}^{\infty} h(x+r(y,u_2)) \diff F_Y^2(y)\\
-(\delta+\lambda_1 + \lambda_2) v(x) +\zeta(x),\;x \in [a,b],
\end{multline*}
where $\rho(z,u):= \inf\left\lbrace y \in \mathbb{R}_0^+: r(y,u)\geq z\right\rbrace$.
\begin{theorem}
\label{thm:Viscosity}
    \begin{enumerate}[(i)]
        \item $\underline{V}$ is a viscosity solution to the lower Bellman-Isaacs equation
        \begin{equation}\label{eq:BI_lower}
            \sup_{u_1 \in A_1}\inf_{u_2 \in A_2} \mathcal{L}(x,v(x),v'(x),v,u_1,u_2)=0,
        \end{equation}
        for $x \in (a,b)$.
        \item $\overline{V}$ is a viscosity solution to the upper Bellman-Isaacs equation
        \begin{equation}\label{eq:BI_upper}
            \inf_{u_2 \in A_2}\sup_{u_1 \in A_1} \mathcal{L}(x,v(x),v'(x),v,u_1,u_2)=0.
        \end{equation}
        for $x \in (a,b)$.
    \end{enumerate}
  \end{theorem}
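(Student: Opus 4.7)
The plan is to establish both (i) and (ii) by the standard two-step viscosity argument, combining the dynamic programming principle (Theorem \ref{thm:DPP}) with a first-order expansion of the controlled PDMP around the test point. I only sketch part (i); part (ii) follows by exchanging the roles of the two players and replacing $\sup\inf$ by $\inf\sup$ throughout. In both directions we argue by contradiction and exploit a bounded stopping time of the form $T \wedge T_1$, where $T_1$ is the first jump time and $T > 0$ is so small that, independently of the controls, the deterministic path between jumps cannot reach $\{a,b\}$ within time $T$ -- possible by compactness of $A_1, A_2$ and continuity of $c_1, c_2$.

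Let $\varphi \in C^1([a,b])$ and suppose $x_0 \in (a,b)$ is a local maximum of $\underline{V} - \varphi$ with $\underline{V}(x_0) = \varphi(x_0)$. Assume for contradiction that $\sup_{u_1 \in A_1} \inf_{u_2 \in A_2} \mathcal{L}(x_0, \varphi(x_0), \varphi'(x_0), \underline{V}, u_1, u_2) \leq -2\eta < 0$. Continuity of $\mathcal{L}$ in $(u_1, u_2)$ combined with compactness of $A_2$ furnishes a measurable selector $u_2^\star : A_1 \to A_2$ such that $\mathcal{L}(x_0, \varphi(x_0), \varphi'(x_0), \underline{V}, u_1, u_2^\star(u_1)) \leq -\eta$ for every $u_1 \in A_1$. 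Setting $\beta^\star(u_1)_t(\omega) := u_2^\star(u_1(t,\omega))$ yields an admissible strategy in $\Delta$, as pointwise composition of the measurable selector with a predictable process of the form \eqref{predictable} is again predictable and respects the $\approx$-equivalence of Definition \ref{def:admissible strategy}. Applying \eqref{DPPlower} to $\beta^\star$ at the stopping time $T \wedge T_1$, the expectation splits across $\{T_1 > T\}$ (no-jump path) and $\{T_1 \leq T\}$ (a single jump, further split according to whether the post-jump state stays in $(a,b)$ or exits, producing the $h$-terms). Using $\underline{V} \leq \varphi$ locally to bound the no-jump term $\underline{V}(X_T)$ from above and Taylor-expanding $\varphi$ to first order, together with the exponential density $\lambda e^{-\lambda t}$ of $T_1$ and the jump mixture $(\lambda_1/\lambda) F_Y^1 + (\lambda_2/\lambda) F_Y^2$, the classical PDMP first-order expansion produces exactly the drift, interior jump integrals, boundary $h$-integrals split at $\rho(\cdot)$, discounting, and running-cost contributions of $\mathcal{L}$, giving
\begin{align*}
    \mathbb{E}_{x_0}\!\left[\ldots\right] \leq \varphi(x_0) + T\, \mathcal{L}\bigl(x_0, \varphi(x_0), \varphi'(x_0), \underline{V}, u_1(0), u_2^\star(u_1(0))\bigr) + o(T) \leq \varphi(x_0) - \eta T/2
\end{align*}
for $T$ small, uniformly in $u_1 \in \mathcal{U}_1$ by compactness. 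Taking the supremum over $u_1$ preserves this bound, contradicting \eqref{DPPlower} which requires $\underline{V}(x_0) \leq \sup_{u_1} \mathbb{E}_{x_0}[\ldots]$.

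For the supersolution direction, at a local minimum $x_0$ with $\underline{V}(x_0) = \varphi(x_0)$, suppose instead $\sup_{u_1} \inf_{u_2} \mathcal{L}(x_0, \ldots) \geq 2\eta > 0$. Fix $u_1^\star \in A_1$ with $\mathcal{L}(x_0, \varphi(x_0), \varphi'(x_0), \underline{V}, u_1^\star, u_2) \geq \eta$ for \emph{every} $u_2 \in A_2$. For an arbitrary $\beta \in \Delta$, the constant control $u_1 \equiv u_1^\star$ lies in $\mathcal{U}_1$ and $\beta(u_1^\star) \in \mathcal{U}_2$; the analogous expansion, now using $\underline{V} \geq \varphi$ locally to bound the no-jump term from below, yields $\mathbb{E}_{x_0}[\ldots] \geq \varphi(x_0) + \eta T/2$, uniformly in $\beta$. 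Taking $\sup_{u_1}$ and then $\inf_\beta$ in \eqref{DPPlower} produces $\underline{V}(x_0) \geq \varphi(x_0) + \eta T/2$, again a contradiction. The main obstacle will be the bookkeeping: (a) lifting the pointwise measurable selector to an admissible non-anticipative strategy in $\Delta$ while verifying the $\approx$-equivalence, and (b) verifying that the first-order expansion captures the $h$-terms correctly through the integral splits at $\rho(x_0-a, u_1)$ and $\rho(b-x_0, u_2)$, matching exactly the post-jump dichotomy ``survives in $(a,b)$'' (contributing $\underline{V}$) versus ``exits'' (contributing $h$), with all remainder terms $o(T)$ uniformly in the controls; once these two points are under control, the contradiction arguments are routine.
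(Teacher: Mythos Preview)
Your proposal is correct and follows essentially the same contradiction-via-DPP route as the paper, which carries out part (ii) in detail using the stopping time $T_1 \wedge t^*$. The only differences are technical rather than structural: the paper builds the selector explicitly as a piecewise-constant map via a finite ball cover of the compact action set (instead of invoking an abstract measurable selection), replaces your Taylor/$o(T)$ heuristic by the Dynkin formula together with Poisson compensation to obtain an exact integral representation $\mathbb{E}_x\bigl[\int_0^T e^{-\delta s}L(X_s,\cdot,\cdot)\,\diff s\bigr]$ whose integrand is then bounded uniformly, and inserts the test function $\varphi$ rather than $\underline{V}$ into the non-local slot of $\mathcal{L}$, which corresponds to a global-touching definition of viscosity solution on $[a,b]$.
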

\begin{proof}
    We only prove $(ii)$, since $(i)$ follows in a similar way. \\ We start by showing that $\overline{V}$ is a supersolution.
    Let $x \in (a,b)$ and let $\varphi$ be a smooth function on $[a,b]$, such that $\varphi(x)=\overline{V}(x)$ and $\overline{V}-\varphi\geq 0$. We have to show that  
    \begin{equation}
    \label{eq:supersolution}
        \inf_{u_2 \in A_2}\sup_{u_1 \in A_1}  \mathcal{L}(x,\varphi(x),\varphi'(x),\varphi,u_1,u_2) \leq 0.
    \end{equation}
    To do so, we assume that there exists some $\theta>0$ such that
    \begin{equation}
    \label{eq:L>0}
         \inf_{u_2 \in A_2}\sup_{u_1 \in A_1}  \mathcal{L}(x,\varphi(x),\varphi'(x),\varphi,u_1,u_2) \geq \theta > 0.
    \end{equation}
    We define,
    \begin{equation*}
        L(x,u_1,u_2):=  \mathcal{L}(x,\varphi(x),\varphi'(x),\varphi,u_1,u_2).
    \end{equation*}
    By \eqref{eq:L>0}, we have that
    \begin{equation*}
         \inf_{u_2 \in A_2}\sup_{u_1 \in A_1} L(x,u_1,u_2) \geq \theta.
    \end{equation*}
    Thus, for each $u_2 \in A_2$, there exists an $u_1\in A_1$ depending on $u_2$, such that 
    \begin{equation*}
        L(x,u_1,u_2)\geq \theta.
    \end{equation*}
    Note that $L$ is uniformly continuous in $u_1$ and $u_2$, since $c_1,c_2$ and $r(y,\cdot)$ are continuous functions on a compact set. Therefore,
    \begin{equation*}
        L(x,u_1,\xi) \geq \frac{3\theta}{4},
    \end{equation*}
    for all $\xi \in B(u_2,r) \cap A_2$ and some $r=r(u_2)>0$. \\
    Because $A_2$ is compact, there exist finitely many distinct points $a_1,\dots,a_n \in A_1$, $b_1,\dots,b_n \in A_2$ and $r_1,\dots,r_n>0$ such that
    \begin{equation*}
        A_2 \subset \bigcup_{i=1}^n B(b_i,r_i)
    \end{equation*}
    and
    \begin{equation}
        L(x,a_i,\xi)\geq \frac{3 \theta}{4}
    \end{equation}
    for $\xi \in B(b_i,r_i)\cap A_2$.\\
    We define a function $f:A_2 \rightarrow A_1$ by setting
    \begin{equation*}
        f(b) = \sum_{k=1}^n a_k \mathbbm{1}_{\{b \in B(b_k, r_k)\setminus\bigcup_{i=1}^{k-1}B(b_i,r_i)\}}.
    \end{equation*}
    Defining
    \begin{equation*}
        \alpha^*(u_2)(\omega)_t = f (u_2(\omega)_t),
    \end{equation*}
    leads to an admissible strategy $\alpha^* \in \Gamma$.\\
    Since $L$ is also uniformly continuous in $x$, there exists some $h>0$ such that for $\hat{x}$ with $\vert x-\hat{x}\vert<h$,
      \begin{equation*}
        L(\hat{x},f(u_2),u_2)\geq \frac{\theta}{2},
    \end{equation*}
    for all $u_2 \in A_2$.\\
    Furthermore, there exists some $t^*>0$ such that $\sup_{u_1 \in A_1,u_2\in A_2} \vert (c_1(u_1)-c_2(u_2))\vert t^*  \leq h$ and $x \pm \sup_{u_1 \in A_1,u_2\in A_2} \vert (c_1(u_1)-c_2(u_2))\vert t^* \in (a,b)$. We set $T:=T_1\wedge t^*$ and $\varphi(x)=0$ for $x \notin [a,b]$.
 
    By definition of $\varphi$ and Theorem \ref{thm:DPP}, it holds that
    \begin{align*}
        \varphi(x) &= \overline{V}(x) \\ &= \sup_{\alpha \in \Gamma} \inf_{u_2 \in \mathcal{U}_2 } \mathbb{E}_x \Biggl[ \int_{0}^{\tau^{\alpha(u_2)u_2}\wedge T}e^{-\delta s} \zeta\left(X_s^{\alpha(u_2)u_2}\right) \diff s + e^{-\delta T} \overline{V}\left(X^{\alpha(u_2)u_2}_{T}\right) \ind{\{T<\tau^{\alpha(u_2)u_2}\}} \nonumber \\ & \qquad +  e^{-\delta \tau^{\alpha(u_2)u_2}} h(X^{\alpha(u_2)u_2}_{\tau^{\alpha(u_2)u_2}}) \ind{\{T\geq\tau^{\alpha(u_2)u_2}\}} \Biggr] \\
        &\geq  \inf_{u_2 \in \mathcal{U}_2 } \mathbb{E}_x \Biggl[ \int_{0}^{\tau^{\alpha^*(u_2)u_2}\wedge T}e^{-\delta s} \zeta\left(X_s^{\alpha^*(u_2)u_2}\right) \diff s + e^{-\delta T} \varphi\left(X^{\alpha^*(u_2)u_2}_{T}\right) \nonumber \\ & \qquad +  e^{-\delta \tau^{\alpha^*(u_2)u_2}} h(X^{\alpha^*(u_2)u_2}_{\tau^{\alpha^*(u_2)u_2}}) \ind{\{T\geq\tau^{\alpha^*(u_2)u_2}\}} \Biggr].
    \end{align*}
     For some $\varepsilon>0$, there exists $u_2^\varepsilon \in \mathcal{U}_2$ such that 
    \begin{align*}
        \varphi(x) &\geq  \mathbb{E}_x \Biggl[ \int_{0}^{\tau^{\alpha^*(u_2^\varepsilon)u_2^\varepsilon}\wedge T}e^{-\delta s} \zeta\left(X_s^{\alpha^*(u_2^\varepsilon)u_2^\varepsilon}\right) \diff s + e^{-\delta T} \varphi\left(X^{\alpha^*(u_2^\varepsilon)u_2^\varepsilon}_{T}\right)  \nonumber \\ & \qquad +  e^{-\delta \tau^{\alpha^*(u_2^\varepsilon)u_2^\varepsilon}} h(X^{\alpha^*(u_2^\varepsilon)u_2^\varepsilon}_{\tau^{\alpha^*(u_2^\varepsilon)u_2^\varepsilon}}) \ind{\{T\geq\tau^{\alpha^*(u_2^\varepsilon)u_2^\varepsilon}\}} \Biggr] - \varepsilon.
    \end{align*}
   Then, the Dynkin formula yields
    \begin{align*}
        &\mathbb{E}_x\left[e^{-\delta T} \varphi(X_T^{\alpha^*(u_2^\varepsilon)u_2^\varepsilon})\right] \\&\qquad =\varphi(x)+\mathbb{E}_x\Bigg[\int_{0}^T e^{-\delta s} \mathcal{A}\varphi(X_s^{\alpha^*(u_2^\varepsilon)u_2^\varepsilon})-\delta e^{-\delta s} \varphi(X_s^{\alpha^*(u_2^\varepsilon)u_2^\varepsilon}) \diff s\Bigg].
    \end{align*}
     Hence, we get that
    \begin{align}
    \label{eq:Viscosityproof}
        \varepsilon &\geq \mathbb{E}_x \Biggl[ \int_{0}^{\tau^{\alpha^*(u_2^\varepsilon)u_2^\varepsilon}\wedge T}e^{-\delta s} \zeta\left(X_s^{\alpha^*(u_2^\varepsilon)u_2^\varepsilon}\right) \diff s \nonumber \\
        &\quad + \int_{0}^T e^{-\delta s} \mathcal{A}\varphi(X_s^{\alpha^*(u_2^\varepsilon)u_2^\varepsilon})-\delta e^{-\delta s} \varphi(X_s^{\alpha^*(u_2^\varepsilon)u_2^\varepsilon}) \diff s \nonumber\\
        &\quad +  e^{-\delta \tau^{\alpha^*(u_2^\varepsilon)u_2^\varepsilon}} h(X^{\alpha^*(u_2^\varepsilon)u_2^\varepsilon}_{\tau^{\alpha^*(u_2^\varepsilon)u_2^\varepsilon}}) \ind{\{T\geq\tau^{\alpha^*(u_2^\varepsilon)u_2^\varepsilon}\}} \Biggr].
    \end{align}
    Further, note that 
    \begin{equation*}
        \mathbb{P}\left[T_1\leq t \vert T_1^i\leq T_1^j\right]=\mathbb{P}\left[T_1 \leq t\right], \quad i,j \in \{1,2\}.
        \end{equation*}
     Therefore,
    \begin{align*}
        &\mathbb{E}_x \left[e^{-\delta \tau^{\alpha^*(u_2^\varepsilon)u_2^\varepsilon}} h(X^{\alpha^*(u_2^\varepsilon)u_2^\varepsilon}_{\tau^{\alpha^*(u_2^\varepsilon)u_2^\varepsilon}}) \ind{\{T\geq\tau^{\alpha^*(u_2^\varepsilon)u_2^\varepsilon}\}}\right] \\
        &=\mathbb{E}_x \left[ e^{-\delta T_1} h(X^{\alpha^*(u_2^\varepsilon)u_2^\varepsilon}_{T_1}) \ind{\{\tau^{\alpha^*(u_2^\varepsilon)u_2^\varepsilon}=T_1\}} \ind{\{T_1 \leq t^*\}}\right]\\
        &=  \mathbb{E}_x \left[  e^{-\delta T_1} \Biggl(\int_{\rho(X^{\alpha^*(u_2^\varepsilon)u_2^\varepsilon}_{T_1-}-a,\alpha^*(u_2^\varepsilon))}^{\infty} h(X^{\alpha^*(u_2^\varepsilon)u_2^\varepsilon}_{T_1-}-r(y,\alpha^*(u_2^\varepsilon)_{T_1})) \diff F_Y^1(y) \right.\\
        &\quad+ \left. \int_{\rho(b-X^{\alpha^*(u_2^\varepsilon)u_2^\varepsilon}_{T_1-},u_2^\varepsilon)}^{\infty} h(X^{\alpha^*(u_2^\varepsilon)u_2^\varepsilon}_{T_1-}+r(y,u_2^\varepsilon(T_1)) \diff F_Y^2(y) \Biggr)\ind{\{T_1 \leq t^*\}} \right]\\ 
        &= \frac{\lambda_1}{\lambda} \mathbb{E}_x \left[ e^{-\delta T_1} \int_{\rho(X^{\alpha^*(u_2^\varepsilon)u_2^\varepsilon}_{T_1-}-a,\alpha^*(u_2^\varepsilon))}^{\infty} h(X^{\alpha^*(u_2^\varepsilon)u_2^\varepsilon}_{T_1-}-r(y,\alpha^*(u_2^\varepsilon)_{T_1})) \diff F_Y^1(y) \ind{\{T_1 \leq t^*\}} \right] \\
        & \quad +\frac{\lambda_2}{\lambda} \mathbb{E}_x \left[ e^{-\delta T_1} \int_{\rho(b-X^{\alpha^*(u_2^\varepsilon)u_2^\varepsilon}_{T_1-},u_2^\varepsilon)}^{\infty} h(X^{\alpha^*(u_2^\varepsilon)u_2^\varepsilon}_{T_1-}+r(y,u_2^\varepsilon(T_1)) \diff F_Y^2(y) \ind{\{T_1 \leq t^*\}}  \right].
    \end{align*}
    By compensation,
    \begin{align*}
        &\mathbb{E}_x \left[ e^{-\delta T_1} \int_{\rho(X^{\alpha^*(u_2^\varepsilon)u_2^\varepsilon}_{T_1-}-a,\alpha^*(u_2^\varepsilon))}^{\infty} h(X^{\alpha^*(u_2^\varepsilon)u_2^\varepsilon}_{T_1-}-r(y,\alpha^*(u_2^\varepsilon)_{T_1})) \diff F_Y^1(y) \ind{\{T_1 \leq t^*\}} \right] \\
        &=  \mathbb{E}_x \left[ \int_0^{T_1} e^{-\delta t} \int_{\rho(X^{\alpha^*(u_2^\varepsilon)u_2^\varepsilon}_{t-}-a,\alpha^*(u_2^\varepsilon))}^{\infty} h(X^{\alpha^*(u_2^\varepsilon)u_2^\varepsilon}_{t-}-r(y,\alpha^*(u_2^\varepsilon)_{t})) \diff F_Y^1(y) \diff N_t \ind{\{T_1 \leq t^*\}} \right] \\
        &= \mathbb{E}_x \left[ \int_0^{T} e^{-\delta t} \int_{\rho(X^{\alpha^*(u_2^\varepsilon)u_2^\varepsilon}_{t-}-a,\alpha^*(u_2^\varepsilon))}^{\infty} h(X^{\alpha^*(u_2^\varepsilon)u_2^\varepsilon}_{t-}-r(y,\alpha^*(u_2^\varepsilon)_{t})) \diff F_Y^1(y) \diff N_t  \right]\\
        &= \lambda \ \mathbb{E}_x \left[ \int_0^{T} e^{-\delta t} \int_{\rho(X^{\alpha^*(u_2^\varepsilon)u_2^\varepsilon}_{t-}-a,\alpha^*(u_2^\varepsilon))}^{\infty} h(X^{\alpha^*(u_2^\varepsilon)u_2^\varepsilon}_{t-}-r(y,\alpha^*(u_2^\varepsilon)_{t})) \diff F_Y^1(y) \diff t \right].
    \end{align*}
    Similar calculations for the second summand and plugging this into \eqref{eq:Viscosityproof}, leads to
    \begin{align*}
        \varepsilon&\geq \mathbb{E}_x \Biggl[ \int_{0}^{\tau^{\alpha^*(u_2^\varepsilon)u_2^\varepsilon}\wedge T}e^{-\delta s} \zeta\left(X_s^{\alpha^*(u_2^\varepsilon)u_2^\varepsilon}\right) \diff s \nonumber \\
        &\quad + \int_{0}^T e^{-\delta s} \mathcal{A}\varphi(X_s^{\alpha^*(u_2^\varepsilon)u_2^\varepsilon})-\delta e^{-\delta s} \varphi(X_s^{\alpha^*(u_2^\varepsilon)u_2^\varepsilon}) \diff s \nonumber\\
        &\quad +  \lambda_1 \int_0^{T} e^{-\delta t} \int_{\rho(X^{\alpha^*(u_2^\varepsilon)u_2^\varepsilon}_{t-}-a,\alpha^*(u_2^\varepsilon))}^{\infty} h(X^{\alpha^*(u_2^\varepsilon)u_2^\varepsilon}_{t-}-r(y,\alpha^*(u_2^\varepsilon)_{t})) \diff F_Y^1(y) \diff t \\
        &\quad + \lambda_2\int_0^{T} e^{-\delta t} \int_{\rho(b-X^{\alpha^*(u_2^\varepsilon)u_2^\varepsilon}_{t-},u_2^\varepsilon)}^{\infty} h(X^{\alpha^*(u_2^\varepsilon)u_2^\varepsilon}_{t-}+r(y,u_2^\varepsilon(t)) \diff F_Y^2(y) \diff t\Biggr].
    \end{align*}
    \ \\
    On the other hand, the last expectation equals, 
    \begin{align*}
        & \mathbb{E}_x \Biggl[ \int_{0}^{\tau^{\alpha^*(u_2^\varepsilon)u_2^\varepsilon}\wedge T} e^{-\delta s} L(X_s^{\alpha^*(u_2^\varepsilon)u_2^\varepsilon },\alpha^*(u_2^\varepsilon),u_2^\varepsilon) \diff s \Biggr]\\
        & \quad = \mathbb{P}(T_1>t^*) \mathbb{E}_x \Biggl[ \int_{0}^{t^*} e^{-\delta s} L(X_s^{\alpha^*(u_2^\varepsilon)u_2^\varepsilon },\alpha^*(u_2^\varepsilon),u_2^\varepsilon) \diff s \Bigg \vert T_1 >t^*\Biggr] \\
       & \qquad + \mathbb{P}(T_1\leq t^*) \mathbb{E}_x \Biggl[ \int_{0}^{T_1} e^{-\delta s} L(X_s^{\alpha^*(u_2^\varepsilon)u_2^\varepsilon },\alpha^*(u_2^\varepsilon),u_2^\varepsilon) \diff s \Bigg \vert T_1\leq t^*\Biggr]\\
       &\quad \geq \frac{\theta}{2\delta}  \left(e^{-\lambda t^*}(1-e^{-\delta t^*})+\frac{1}{\delta + \lambda}\left(\delta + e^{-(\delta + \lambda)t^*} \lambda - e^{-\lambda t^*} (\delta + \lambda)\right)\right).
    \end{align*}
    If we now choose $\varepsilon$ strictly smaller than the last line, we arrive at a contradiction, therefore \eqref{eq:supersolution} holds. \\
    
To prove that $\overline{V}$ is a subsolution, we have to show that for some smooth function $\psi$ on $[a,b]$ with $\psi(x)=\overline{V}(x)$ and $\overline{V}-\psi \leq 0$,

    \begin{equation}
    \label{eq:subsolution}
        \inf_{u_2 \in A_2}\sup_{u_1 \in A_1}  \mathcal{L}(x,\psi(x),\psi'(x),\psi,u_1,u_2) \geq 0. 
    \end{equation}
    Suppose,there exists some $\eta >0$ such that
    \begin{equation}
    \label{eq:K<0}
        \inf_{u_2 \in A_2}\sup_{u_1 \in A_1}  \mathcal{L}(x,\psi(x),\psi'(x),\psi,u_1,u_2) \leq - \eta <0.
    \end{equation}
    We define 
    \begin{equation*}
        K(x,u_1,u_2):= \mathcal{L}(x,\psi(x),\psi'(x),\psi,u_1,u_2)
    \end{equation*}
    Since $A_2$ is compact, we have by \eqref{eq:K<0} that there exists some $\tilde{u}_2 \in A_2$ such that
    \begin{equation*}
        \sup_{u_1} K(x,u_1,\tilde{u}_2) \leq -\eta.
    \end{equation*}
    Since $K$ is again uniformly continuous in $x$, there exists some $\tilde{h}>0$ such that for $\hat{x}$ with $\vert x-\hat{x}\vert<\tilde{h}$,
    \begin{equation*}
        K(\hat{x},\alpha(\tilde{u}_2),\tilde{u}_2)\leq -\frac{\eta}{2},
    \end{equation*}
    for all $\alpha \in \Gamma$.
    Note that the constant control $\tilde{u}_2$, given by $\tilde{u}^2_t\equiv\tilde{u}_2$ is an admissible control, i.e., $\tilde{u}_2 \in \mathcal{U}_2$. Again, there exists some $\tilde{t}>0$ such that $\sup_{u_1 \in A_1,u_2\in A_2} \vert (c_1(u_1)-c_2(u_2))\vert \tilde{t} \leq \tilde{h}$ and $x \pm \sup_{u_1 \in A_1,u_2\in A_2} \vert (c_1(u_1)-c_2(u_2))\vert \tilde{t} \in (a,b)$. We set $\tilde{T}:=T_1\wedge \tilde{t}$ and $\psi(x)=0$ for $x \notin [a,b]$.
    Let $\varepsilon>0$. Again by Theorem \ref{thm:DPP}, the definition of $\psi$ and the existence of some $\varepsilon$-optimal strategy $\alpha_\varepsilon$, we get by similar calculations as before, that
    
    \begin{align}
        -\varepsilon &\leq \mathbb{E}_x \Biggl[ \int_{0}^{\tau^{\alpha_\varepsilon(\tilde{u}_2)\tilde{u}_2}\wedge \tilde{T}}e^{-\delta s} \zeta\left(X_s^{\alpha_\varepsilon(\tilde{u}_2)\tilde{u}_2}\right) \diff s \nonumber \\
        &\quad + \int_{0}^{\tilde{T}} e^{-\delta s} \mathcal{A}\varphi(X_s^{\alpha_\varepsilon(\tilde{u}_2)\tilde{u}_2})-\delta e^{-\delta s} \varphi(X_s^{\alpha_\varepsilon(\tilde{u}_2)\tilde{u}_2}) \diff s \nonumber\\
        &\quad +  \lambda_1 \int_0^{\tilde{T}} e^{-\delta t} \int_{\rho(X_{t-}-a,\alpha_\varepsilon(\tilde{u}_2))}^{\infty} h(X_{t-}-r(y,\alpha_\varepsilon(\tilde{u}_2)_{t})) \diff F_Y^1(y) \diff t \nonumber\\
        &\quad + \lambda_2\int_0^{\tilde{T}} e^{-\delta t} \int_{\rho(b-X_{t-},\tilde{u}_2)}^{\infty} h(X_{t-}+r(y,\tilde{u}_2) \diff F_Y^2(y) \diff t\Biggr].
    \end{align}
As before the last expectation is,
 \begin{align*}
     &\mathbb{E}_x \Biggl[ \int_{0}^{\tau^{\alpha_\varepsilon(\tilde{u}_2)\tilde{u}_2}\wedge \tilde{T}} e^{-\delta s} K(X_s^{\alpha_\varepsilon(\tilde{u}_2)\tilde{u}_2 },\alpha_\varepsilon(\tilde{u}_2),\tilde{u}_2) \diff s \Biggr]\\
     & \quad \leq  -\frac{\eta}{2\delta}  \left(e^{-\lambda \tilde{t}}(1-e^{-\delta \tilde{t}})+\frac{1}{\delta + \lambda}\left(\delta + e^{-(\delta + \lambda)\tilde{t}} \lambda - e^{-\lambda \tilde{t}} (\delta + \lambda)\right)\right).
 \end{align*}
  By choosing $\varepsilon < \frac{\eta}{2\delta}  \left(e^{-\lambda \tilde{t}}(1-e^{-\delta \tilde{t}})+\frac{1}{\delta + \lambda}\left(\delta + e^{-(\delta + \lambda)\tilde{t}} \lambda - e^{-\lambda \tilde{t}} (\delta + \lambda)\right)\right) $  we obtain another contradiction.
\end{proof}

\subsection{Uniqueness of solution to Bellman-Isaacs equation}

Since $\underline{V}$ and $\overline{V}$ fulfill \eqref{DPPlower} and \eqref{DPPupper}, it seems reasonable to use a fixed point argument to show uniqueness of the solution. Unfortunately, a direct link between a fixed point and (viscosity-)solutions can only be achieved if we have absolute continuity and the existence of Markovian optimal controls. Therefore, we are going to show a classical comparison theorem for the set of Bellman-Isaacs equations. The proof follows an idea of \cite{AzcueMuler2014}.
\begin{theorem}
Let $\underline{v}:[a,b]\to\mathbb{R}$ and $\overline{v}:[a,b]\to\mathbb{R}$ be continuous and bounded viscosity sub- and supersolutions to \eqref{eq:BI_upper} (or \eqref{eq:BI_lower}). If it holds that $\underline{v}(a)\leq\overline{v}(a)$ and $\underline{v}(b)\leq\overline{v}(b)$, then $\underline{v}(x)\leq \overline{v}(x)$ for all $x\in[a,b]$.
\end{theorem}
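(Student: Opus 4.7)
My strategy is to adapt the classical doubling-of-variables technique from viscosity solution theory to the present non-local setting, following the idea of \cite{AzcueMuler2014}. I focus on \eqref{eq:BI_upper}; the argument for \eqref{eq:BI_lower} is identical with the roles of $\inf$ and $\sup$ swapped. Suppose by contradiction that $M := \max_{x \in [a,b]}(\underline{v}(x) - \overline{v}(x)) > 0$. By the boundary hypothesis and continuity, $M$ is attained at some interior point $x^* \in (a,b)$. For each $\varepsilon > 0$ I introduce the penalized functional
\begin{equation*}
\Phi_\varepsilon(x,y) := \underline{v}(x) - \overline{v}(y) - \frac{(x-y)^2}{\varepsilon}
\end{equation*}
on the compact square $[a,b]^2$ and let $(x_\varepsilon, y_\varepsilon)$ be a maximizer. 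A standard compactness argument yields $x_\varepsilon, y_\varepsilon \to x^*$, $(x_\varepsilon - y_\varepsilon)^2/\varepsilon \to 0$, and $\Phi_\varepsilon(x_\varepsilon, y_\varepsilon) \to M$, so $(x_\varepsilon, y_\varepsilon)$ lies in the interior for all sufficiently small $\varepsilon$.

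The function $\phi_1(x) := \overline{v}(y_\varepsilon) + (x-y_\varepsilon)^2/\varepsilon$ is a valid smooth test function for $\underline{v}$ at $x_\varepsilon$ (since $\underline{v} - \phi_1$ attains its maximum there), while $\phi_2(y) := \underline{v}(x_\varepsilon) - (x_\varepsilon - y)^2/\varepsilon$ is a valid test function for $\overline{v}$ at $y_\varepsilon$; crucially both share the same derivative $p_\varepsilon := 2(x_\varepsilon - y_\varepsilon)/\varepsilon$ at the contact point. Exploiting the $\inf_{u_2}\sup_{u_1}$ structure of \eqref{eq:BI_upper}, for any $\eta > 0$ I first pick a near-minimizing $u_2^* \in A_2$ in the supersolution inequality at $y_\varepsilon$ and then, for that same $u_2^*$, a near-maximizing $u_1^* \in A_1$ in the subsolution inequality at $x_\varepsilon$, producing a single pair $(u_1^*, u_2^*)$ satisfying simultaneously
\begin{equation*}
\mathcal{L}(x_\varepsilon, \underline{v}(x_\varepsilon), p_\varepsilon, \underline{v}, u_1^*, u_2^*) \geq -\eta \quad \text{and} \quad \mathcal{L}(y_\varepsilon, \overline{v}(y_\varepsilon), p_\varepsilon, \overline{v}, u_1^*, u_2^*) \leq \eta.
\end{equation*}

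Subtracting these inequalities, the drift contribution $(c_1(u_1^*) - c_2(u_2^*)) p_\varepsilon$ cancels exactly. For the non-local parts, the crucial consequence of the maximality of $(x_\varepsilon, y_\varepsilon)$ is that for every common shift $s$ keeping both shifted arguments in $[a,b]$ one has $\underline{v}(x_\varepsilon - s) - \overline{v}(y_\varepsilon - s) \leq \underline{v}(x_\varepsilon) - \overline{v}(y_\varepsilon)$; applied to $s = r(y, u_1^*)$ and $s = -r(y, u_2^*)$ this controls the ``interior'' parts of the four integrals. The residual pieces come from three sources: the mismatched thresholds $\rho(x_\varepsilon - a, u_1^*)$ vs.\ $\rho(y_\varepsilon - a, u_1^*)$ (and the analogue near $b$), which produce a sliver of $F_Y^i$-mass that vanishes as $\varepsilon \to 0$ by continuity of $\rho(\cdot, u)$ and of $F_Y^i$; the $h$-tails, absorbed into an $o(1)$ term through Lipschitz continuity of $h$ outside $(a,b)$; and $\zeta(x_\varepsilon) - \zeta(y_\varepsilon) = o(1)$ by Lipschitz continuity of $\zeta$. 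Writing $D_\varepsilon := \underline{v}(x_\varepsilon) - \overline{v}(y_\varepsilon)$, the non-local contribution is thus bounded by $\lambda D_\varepsilon + o(1)$ (total integral coefficient at most $\lambda_1 + \lambda_2 = \lambda$) while the pointwise term contributes $-(\delta + \lambda) D_\varepsilon$, leaving $\delta D_\varepsilon \leq o(1) + 2\eta$. Sending $\varepsilon \to 0$ and then $\eta \to 0$ yields $\delta M \leq 0$, contradicting $M > 0$.

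The main obstacle I anticipate is precisely this last piece of bookkeeping: establishing, uniformly in the near-optimal controls $u_1^*, u_2^*$, that the difference of the two non-local operators is dominated by $\lambda D_\varepsilon + o(1)$, despite the variable integration thresholds and the switch between the $v$-integrand on $[0, \rho(\cdot, u)]$ and the $h$-integrand beyond. This rests on compactness of $A_1, A_2$, continuity of $r(y, \cdot)$ and of $\rho(\cdot, u)$, continuity of $F_Y^i$, and Lipschitz regularity of $h$ off $(a,b)$.
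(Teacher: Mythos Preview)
Your proposal is correct and follows essentially the same doubling-of-variables argument as the paper: both set up $\Phi_\varepsilon$ (the paper's $G_\nu$ with $\nu=1/\varepsilon$), locate the interior maximizer, feed the common slope into the sub- and supersolution inequalities with a suitably chosen control pair, subtract, and extract the contradiction $\delta M\leq 0$ from the $(\delta+\lambda)$ versus $\lambda$ gap. The one noteworthy technical deviation is that the paper passes to the limit $\nu\to\infty$ in the full integral inequality and only then invokes the global bound $\underline{v}-\overline{v}\leq\widetilde{M}$, whereas you exploit the translation invariance of the penalty to obtain $\underline{v}(x_\varepsilon-s)-\overline{v}(y_\varepsilon-s)\leq D_\varepsilon$ already at finite~$\varepsilon$; this slightly streamlines the non-local bookkeeping (no need to pass to limits of the $\varepsilon$-dependent controls), but both routes rest on the same compactness and continuity ingredients you flag in your final paragraph.
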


\begin{proof}
    Assume that there exists some $x_0 \in (a,b)$ where $\underline{v}(x_0)>\overline{v}(x_0)$. Then,
    \begin{equation}
        0< \max_{x\in [a,b]} \left(\underline{v}(x)-\overline{v}(x)\right) = \widetilde{M},
        \label{eq:Max>0}
    \end{equation}
    with maximizing argument $x^*$. We define for $\nu>0$ and $(x,y)\in[a,b]^2$,
    \begin{equation*}
        G_\nu(x,y) := \underline{v}(x)-\overline{v}(x)-g^\nu(x,y),
    \end{equation*}
    where 
    \begin{equation*}
        g^\nu(x,y) := \nu (x-y)^2,
    \end{equation*}
    and set
    \begin{equation*}
        M_\nu = \max_{(x,y)\in [a,b]^2} G_\nu (x,y),
    \end{equation*}
   again with with maximizer $(x_\nu,y_\nu)$. We see, that
    \begin{equation*}
        M_\nu \geq G_v(x^*,x^*) = \widetilde{M},
    \end{equation*}
    and therefore,
   \begin{equation}
    \label{eq:M_nu>0}
       \liminf_{\nu \rightarrow \infty} M_{\nu} \geq \widetilde{M} >0.
    \end{equation}
    To exploit the differentiability of $g^\nu$, we first need to show that $(x_\nu,y_\nu)$ is not an element of the boundary of $[a,b]^2$, i.e., $(x_\nu,y_\nu) \in (a,b)^2$.
    Indeed, we immediately see
    \begin{equation*}
        G_\nu (a,a) \leq 0, \quad G_\nu(b,b) \leq 0,
    \end{equation*}
    but we know that $M_\nu >0$ from \eqref{eq:M_nu>0}. Furthermore, for $(x,y) \in \partial[a,b]^2$ with $|x-y|\geq\sqrt{\varepsilon}>0$ - the edges of the rectangle and the vertices $\{(a,b),\,(b,a)\}$,
    \begin{equation*}
        G_\nu(x,y)= \underline{v}(x)-\overline{v}(y)-\nu (x-y)^2 \leq 2M -\nu \varepsilon <0,
    \end{equation*}
    for $\nu> 2M/\varepsilon$, where $M$ is a bound of both $\overline{v}$ and $\underline{v}$. Therefore, $(x_\nu,y_\nu) \notin \partial[a,b]^2$ for $\nu$ large enough.\\
    Since $G_\nu$ attains a maximum at $(x_\nu,y_\nu)$, we have that for any $x \in [a,b]$, $G_\nu(x,y_\nu) \leq G_\nu(x_\nu,y_\nu) $ which translates to 
    \begin{equation*}
        \underline{v}(x)-\underline{v}(x_\nu) \leq g^\nu (x,y_\nu) - g^\nu (x_\nu,y_\nu).
    \end{equation*}
    Taylor's theorem then yields
    \begin{equation*}
        \limsup_{x\rightarrow x_\nu} \frac{\underline{v}(x)-\underline{v}(x_\nu)-g_x^\nu(x_\nu,y_\nu)(x-x_\nu)-o(|x-x_\nu|)}{|x-x_\nu|} \leq 0.
    \end{equation*}
    This implies that $g_x^\nu(x_\nu,y_\nu)\in D^+(\underline{v})(x_\nu)$, which is the set of all super-differentials of $\underline{v}$ in $x_\nu$.\\
    Equivalently, for any $y \in [a,b]$ we have $ G_\nu(x_\nu,y) \leq G_\nu(x_\nu,y_\nu)$ and
    \begin{equation*}
         \overline{v}(y)-\overline{v}(y_\nu) \geq g^\nu (x_\nu,y_\nu) - g^\nu (x_\nu,y).
    \end{equation*}
    Again this leads to 
    \begin{equation*}
        \liminf_{y\rightarrow y_\nu}\frac{\overline{v}(y)-\overline{v}(y_\nu) - \left(-g_y^\nu(x_\nu,y_\nu)(y-y_\nu)\right)+o(|y-y_\nu|)}{|y-y_\nu|}\geq 0.
    \end{equation*}
    Thus, $-g_y^\nu(x_\nu,y_\nu) \in D^-(\overline{v})(y_\nu)$, which is the set of all sub-differentials of $\overline{v}$ in $y_\nu$. 
    Since $\underline{v}$ and $\overline{v}$ are sub- and supersolution of $\eqref{eq:BI_upper}$, the following holds:
    \begin{align}
        \inf_{u_2 \in A_2} \sup_{u_1 \in A_1} \mathcal{L}(x_\nu,\underline{v}(x_\nu),g_x^\nu(x_\nu,y_\nu),\underline{v}(x_\nu),u_1,u_2)\geq 0, \label{eq:proof_line1}\\
        \inf_{u_2 \in A_2} \sup_{u_1 \in A_1} \mathcal{L}(y_\nu,\overline{v}(y_\nu),-g_y^\nu(x_\nu,y_\nu),\overline{v}(y_\nu),u_1,u_2)\leq 0. \label{eq:proof_line2}
    \end{align}
    By compactness, there exist $\underline{u}_1,\underline{u}_2$ and $\overline{u}_1,\overline{u}_2$  corresponding to optimizers in \eqref{eq:proof_line1} and \eqref{eq:proof_line2} respectively. Additionally, note that $g_x^\nu(x_\nu,y_\nu)=-g_y^\nu(x_\nu,y_\nu)$.\\
    Now, we subtract \eqref{eq:proof_line1} from \eqref{eq:proof_line2}, choose $u_1=\underline{u}_1$ and $u_2 = \overline{u}_2$ and get
        \begin{align}
        &(\delta + \lambda_1+\lambda_2)\left(\underline{v}(x_\nu)-\overline{v}(y_\nu)\right)\leq \zeta(x_\nu)-\zeta(y_\nu) \nonumber\\
        &+\lambda_1 \left(\int_{0}^{\rho(x_\nu-a,\underline{u}_1)}\underline{v}(x_\nu-r(y,\underline{u}_1))\diff F_Y^1(y)- \int_{0}^{\rho(y_\nu-a,\underline{u}_1)}\overline{v}(y_\nu-r(y,\underline{u}_1))\diff F_Y^1(y)\right) \nonumber\\
        & + \lambda_2 \left(  \int_{0}^{\rho(b-x_\nu,\overline{u}_2)} \underline{v}(x_\nu + r(y,\overline{u}_2)) \diff F_Y^2(y)- \int_{0}^{\rho(b-y_\nu,\overline{u}_2)} \overline{v}(y_\nu + r(y,\overline{u}_2)) \diff F_Y^2(y)\right) \nonumber\\
        &+\lambda_1 \left( \int_{\rho(x_\nu-a,\underline{u}_1)}^{\infty}h(x_\nu-r(y,\underline{u}_1))\diff F_Y^1(y) - \int_{\rho(y_\nu-a,\underline{u}_1)}^{\infty}h(y_\nu-r(y,\underline{u}_1))\diff F_Y^1(y) \right) \nonumber \\
        &+ \lambda_2 \left( \int_{\rho(b-x_\nu,\overline{u}_2)}^{\infty}h(x_\nu+r(y,\overline{u}_2))\diff F_Y^2(y) - \int_{\rho(b-y_\nu,\overline{u}_2)}^{\infty}h(y_\nu+r(y,\overline{u}_2))\diff F_Y^2(y) \right). \label{eq:proof_inequality}
    \end{align}
    To carry on, we need further properties of the maximizer $(x_\nu,y_\nu)$, especially for $\nu \rightarrow \infty$. At first we observe that
    \begin{equation*}
        G_\nu(x_\nu,x_\nu)+G_\nu(y_\nu,y_\nu)\leq 2 G_\nu(x_\nu,y_\nu),
    \end{equation*}
    which gives us the following estimate on $(x_\nu - y_\nu)^2$:
    \begin{align}
        \underline{v}(x_\nu)-\overline{v}(x_\nu)+\underline{v}(y_\nu)-\overline{v}(y_\nu)\leq 2 \left(\underline{u}(x_\nu)-\overline{v}(y_\nu)-\lambda(x_\nu - y_\nu)^2\right) \nonumber\\
        \Leftrightarrow 2 \nu (x_\nu-y_\nu)^2 \leq \underline{v}(x_\nu)-\overline{v}(y_\nu)\leq 2 M \nonumber\\
        \Leftrightarrow (x_\nu - y_\nu)^2\leq \frac{M}{\nu}. \label{eq:x=y}
    \end{align}
    Using the compactness of $[a,b]^2$, we have existence of a sequence $(\nu_n)_{n \in \mathbb{N}}$ with $\nu_n \rightarrow \infty$, such that $\lim_{n\to\infty}(x_{\nu_{n}},y_{\nu_{n}})=(\overline{x},\overline{y}) \in [a,b]^2$.\\
    From \eqref{eq:x=y} we get $\overline{x}=\overline{y}$, together with \eqref{eq:proof_inequality} and \eqref{eq:Max>0}, this gives us
    \begin{align*}
         &(\delta + \lambda_1+\lambda_2)\left(\underline{v}(\overline{x})-\overline{v}(\overline{x}))\right) \\
         & \quad \leq 
         \lambda_1 \int_{0}^{\rho(\overline{x}-a,\underline{u}_1)}\underline{v}(\overline{x}-r(y,\underline{u}_1))-\overline{v}(\overline{x}-r(y,\underline{u}_1))\diff F_Y^1(y)\\
         & \qquad + \lambda_2 \int_{0}^{\rho(b-\overline{x},\overline{u}_2)} \underline{v}(\overline{x}+ r(y,\overline{u}_2))- \overline{v}(\overline{x}+ r(y,\overline{u}_2))\diff F_Y^2(y) \leq (\lambda_1+\lambda_2) \widetilde{M}
    \end{align*}
    Altogether, we arrive at the contradiction
    \begin{equation*}
        \widetilde{M}\leq \liminf_{\nu \rightarrow \infty} M_\nu \leq \lim_{n \to \infty} M_{\nu_n} = \underline{v}(\overline{x})-\overline{v}(\overline{x}) \leq \frac{\lambda_1+\lambda_2}{\delta+\lambda_1+\lambda_2} \widetilde{M}.
    \end{equation*}
   This proves the statement of the theorem.
\end{proof}

\begin{remark}
Since the involved operators are non-local, the boundary values in $a$ and $b$ are not necessarily a-priori known, indeed they are part of the solution. Intuitively, this follows from controlling the distribution of the post-jump location and the sign of the drift close to the boundaries. $\underline{V}$ and $\overline{V}$ are both sub- and supersolutions to the respective equations, and therefore their uniqueness - as solutions to the Bellman-Isaacs equations - follows from the comparison theorem and the implicitly given boundary values.
\end{remark}

\section{Numerical examples}
In the previous sections, we discussed that the value functions are viscosity solutions to Bellman-Isaacs equations. From the theory of viscosity solutions we know that if the functions $\underline{V}$ and $\overline{V}$ were continuously differentiable, they would be true solutions to these equations. Therefore, we are now trying to find numerical solutions and corresponding controls. Note that if we find controls $u_1^*$ and $u_2^*$ for which the upper and lower value coincide, a Nash equilibrium is immediately achieved. 

For the numerical approach, we use a policy iteration algorithm:  We divide the interval $[a,b]$ into an equidistant grid $\{x_1,\dots,x_n\}$, where $a=x_1\leq x_2\leq\dots\leq x_n=b$ and set $u_1^0=u_2^0=\overline{u}$ where $\overline{u}$ corresponds to the constant control of no reinsurance. Then, we calculate $J^{u_1^0 u_2^0}$ by numerically solving  
\begin{equation}
    \mathcal{L}(x,J^{u_1^0 u_2^0}(x),(J^{u_1^0 u_2^0})^{'}(x),J^{u_1^0 u_2^0},u_1^0,u_2^0)=0, \label{eq:numSolve}
\end{equation}
for every $x \in \{x_2,\dots,x_{n-1}\}$. On the boundary, i.e., for $x_1=a$ and $x_n=b$, we might have to either solve an equation or deal with a boundary value. We will discuss this later in a concrete example. Then, we update the controls one at a time, for example when starting with updating the control of player 2, we set for $x \in \{x_1,\dots,x_n\}$,
\begin{equation*}
    u_2^1(x) = \argmin_{u \in A_2}  \mathcal{L}(x,J^{u_1^0 u_2^0}(x),(J^{u_1^0 u_2^0})^{'}(x),J^{u_1^0 u_2^0},u_1^0,u).
\end{equation*}
Afterwards, we calculate $J^{u_1^0 u_2^1}$ like in \eqref{eq:numSolve} but with $u_2^1$ instead of $u_2^0$. Then, we find $u_1^1$ by 
\begin{equation*}
    u_1^1(x) = \argmax_{u \in A_1}  \mathcal{L}(x,J^{u_1^0 u_2^1}(x),(J^{u_1^0 u_2^1})^{'}(x),J^{u_1^0 u_2^1},u,u_2^1),
\end{equation*}
$x \in \{x_1,\dots,x_n\}$ and again solve the equation for $J^{u_1^1 u_2^1}$. We proceed in this way for $u_2^2,u_1^2,\dots$ until there are no more significant changes in the controls or value function. I.e., we fix a level $\varepsilon\approx 10^{-5}$ and stop the iteration if the maximal improvement on the grid is smaller than $\varepsilon$. Markov controls are inherently used by the nature of the schema. This procedure is also motivated by \cite{FlemingSoner2006}, p.~379.

We consider the following (test-) functional:
\begin{equation}
    J^{u_1 u_2}(x)=\mathbb{E}_x \left[e^{-\delta \tau^{u_1 u_2}} \ind{\{X^{u_1 u_2}_{\tau^{u_1 u_2}} \geq b \}} \right],
\end{equation}
with corresponding value functions $\underline{V}$ and $\overline{V}$. This results from setting $\zeta(x)=0$ and $h(x)=\ind{\{x\geq b\}}$. If $\delta = 0$, this is just the probability that the process exits the interval $[a,b]$ at the upper endpoint. It is intuitive that player one would aim to maximize this probability, as a larger probability corresponds to a greater expected surplus relative to player two. Conversely, player two wants to prevent the process from outgrowing $b$, therefore tries to find a minimizing strategy. Note that in this section, the terms ``control" and ``strategy" are used interchangeably.

\subsection{Proportional reinsurance, variance principle and exponential claims}
\label{sec:proportional}
\begin{figure}[h]
		\includegraphics[width=1.0\textwidth]{./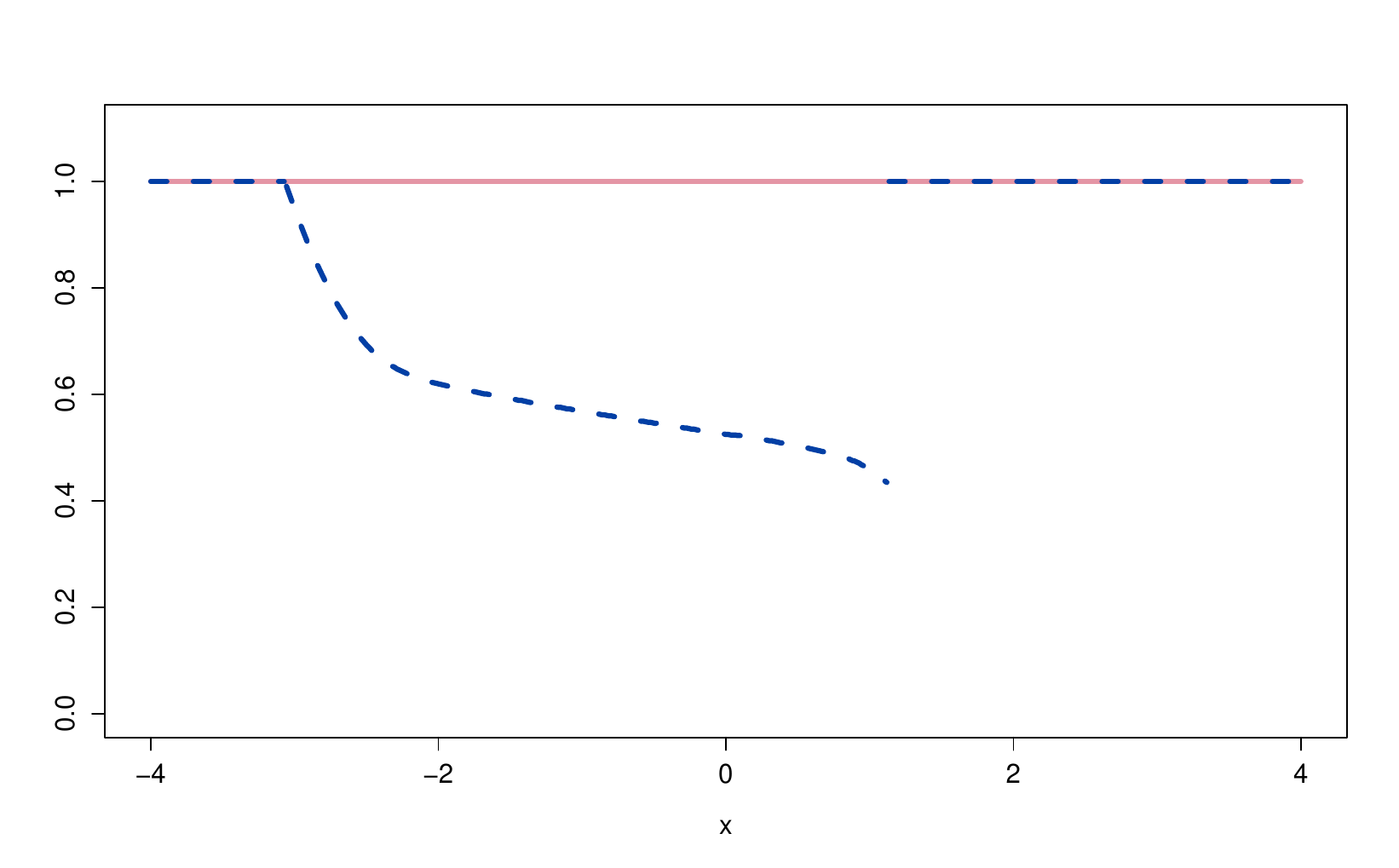}
		\caption{Strategies of player 1 and 2 after 10 iterations. The controls of Player 1 and 2 correspond to the solid and dashed line respectively.}
		\label{fig:u1andu2_prop}
\end{figure}
We start by setting $a=-4$, $b=4$ and $A=A_1=A_2=[0,1]$. The risk share function $r:\mathbb{R}_0^+ \times A \rightarrow \mathbb{R}_0^+$ corresponding to proportional reinsurance is given by $r(y,u)=yu$. We assume that the claim heights are exponentially distributed with rate $1/\mu_i$, for $i \in \{1,2\}$. For the first insurer's premium without acquiring reinsurance we apply the expectation principle, i.e., for $i \in \{1,2\}$, $c_i = \lambda_i (1+\eta_i) \mu_i$, with risk loading $\eta_i>0$ and $\mu_i= \mathbb{E}(Y_1^i)$. For the reinsurer's premium, we apply the variance principle which leads us to the following premium function:
\begin{equation*}
    c_i(u)=c_i-\lambda_i(\mu_i(1-u)+\theta_i\mu_i^2(1-u)^2),
\end{equation*}
where $\theta_i>0$ is the safety loading corresponding to the reinsurance premium.
In Figure~\ref{fig:u1andu2_prop}, we used the parameters $\lambda_1=\lambda_2=1$, $\delta= 0.05$, $\mu_1=1$, $\mu_2=2$, $\eta_1=0.1$, $\eta_2=0.08$, $\theta_1=\theta_2=0.12$. Here, we see that $c_2=c_2(1)>c_1(u)$, for any $0\leq u \leq 1$. Therefore, close to $a$ player 2 will take the chance and produce a maximal negative drift, which results in the boundary value $\underline{V}(a)=\overline{V}(a)=0$. In the middle of the interval, reinsurance is actively purchased, whereas close to $b$ no reinsurance is taken in order to delay an upper exit. For player one, it is better not to acquire any reinsurance but to maximize the drift in order to exceed the upper bound of the interval as fast as possible. This also means that this player voluntarily accepts the full extent of potential claims.

\subsection{Excess of loss reinsurance and expectation principle}

In this framework, we set $a=-2$ and $b=2$, as well as $A=A_1=A_2=[0,\infty]$. The risk share function for excess of loss reinsurance is given by $r(y,M)=\min(y,M)$. The claim amount which exceeds $M$ is covered by the reinsurer. The first insurers premium is calculated like in Section \ref{sec:proportional}.

\begin{figure}[ht!]
	\includegraphics[width=1\textwidth]{./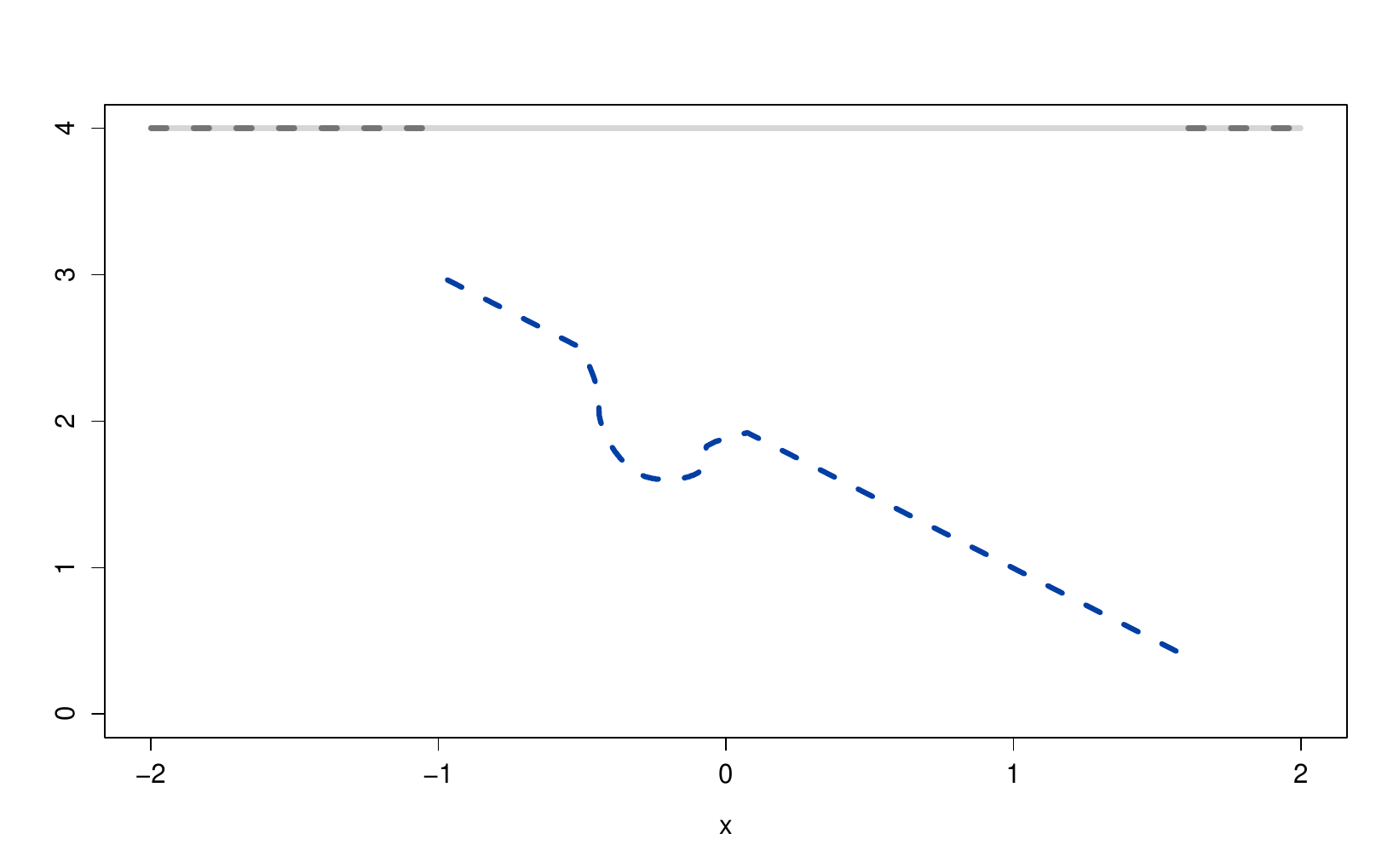}
	\caption{Strategies of player 1 and 2 after 10 iterations with exponentially distributed claims. The solid line represents player one and the dashed player two. Technically, ``no reinsurance" corresponds to an infinite value of the control, but for illustrative purposes it is shown in  gray.}
	\label{fig:u1andu2_EoL_exponential}
	\includegraphics[width=1\textwidth]{./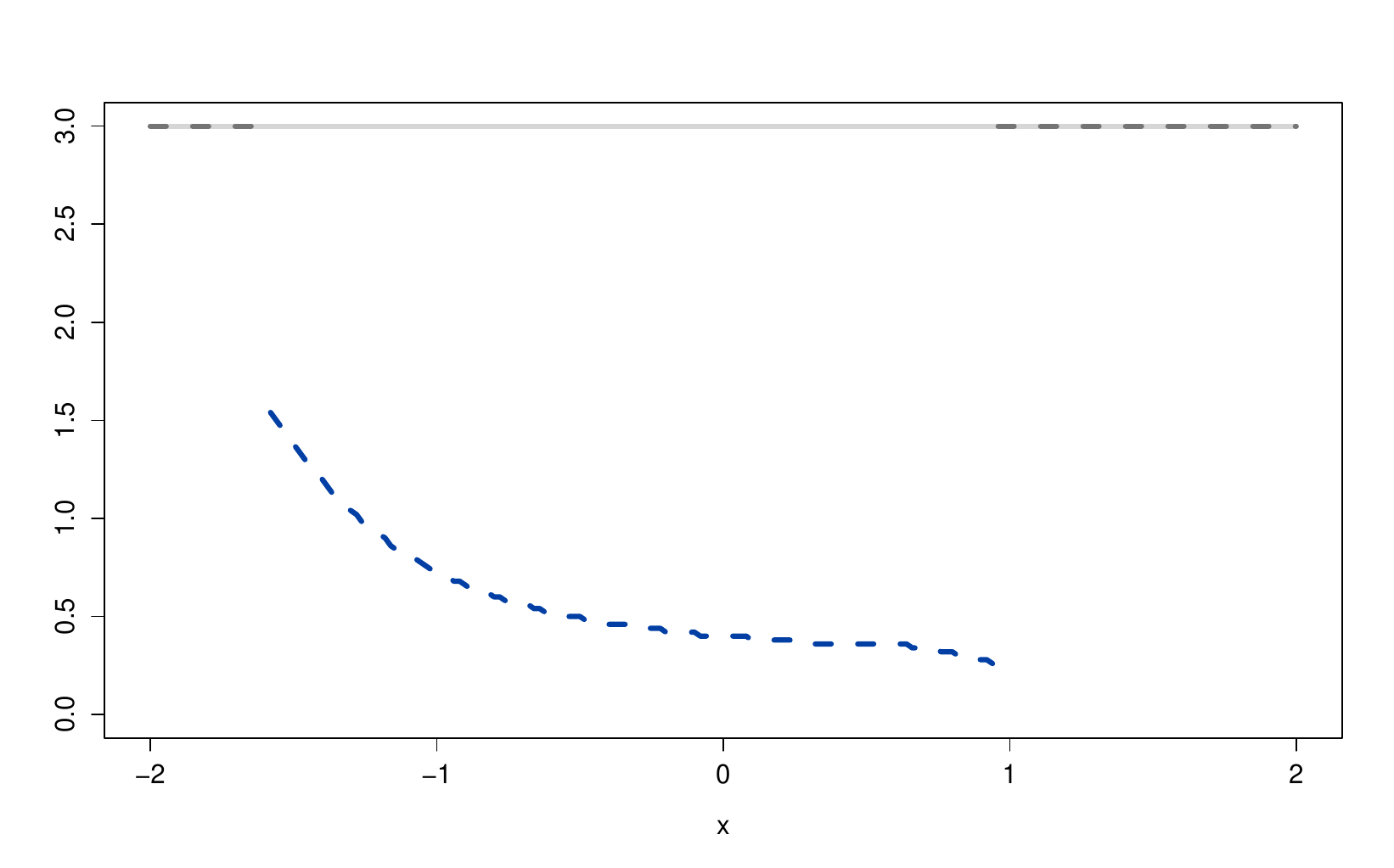}
	\caption{Strategies of player 1 and 2 after 10 iterations with Pareto II distributed claims. Again, the solid line shows the strategy of player one and the dashed one the strategy of player two, and the color gray stands for ``no reinsurance".}
	\label{fig:u1andu2_EoL_Pareto}
\end{figure}

\subsubsection{Exponential claims}
Let the claims be exponentially distributed with rate $1/\mu_i$, for $i \in \{1,2\}$. The premium function is now calculated via the expectation principle, i.e.,
\begin{equation*}
    c_i(M) = c_i-\lambda_i (1+\theta_i)\mu_i e^{-M/\mu_i}.
\end{equation*}
With the same parameter values as in Section \ref{sec:proportional}, we arrive at Figure \ref{fig:u1andu2_EoL_exponential}. The plot suggests that in a certain range, the optimal control of player 2 is linear in $x$. Perhaps certain adjustments in the simulation procedure would lead to a straight line. However, in the interest of comparability and computational effort, the method remained unchanged across all examples - same discretization and termination criterion.

\subsubsection{Pareto type II distributed claims}
 Now, we assume that the claims are Pareto II distributed with shape $\alpha_i$ and scale $\beta_i$. By applying the expectation principle, the premium function is given by
\begin{equation*}
    c_i(M) = c_i-\lambda_i (1+\theta_i) \left(\frac{\beta_{i}^{\alpha_i}}{\alpha_i-1}(M+\beta_i)^{-\alpha_i+1} \right).
\end{equation*}
In Figure \ref{fig:u1andu2_EoL_Pareto}, we chose $\alpha_1=\alpha_2=3$, $\beta_1=\beta_2=1$ and retain the remaining ones from Section \ref{sec:proportional}.
For these parameters, $\lim_{m \to \infty} c_1(m) > \lim_{m \to \infty} c_2(m)$, therefore  $\underline{V}(b)=\overline{V}(b)=1$.

\section*{Declarations}

This research was funded in whole or in part by the Austrian Science Fund (FWF) [10.55776/P33317]. For open access purposes, the authors have applied a CC BY public copyright license to any author-accepted manuscript version arising from this submission.

\bibliographystyle{plainnat}

\end{document}